\documentclass[11pt]{article}

\usepackage[margin=1in]{geometry}
\usepackage{amsmath,amssymb,amsthm,mathtools}
\usepackage{thmtools}
\usepackage{mathrsfs}
\usepackage{enumitem}
\usepackage{array}
\usepackage{booktabs}
\usepackage{longtable}
\usepackage{multirow}
\usepackage{graphicx}
\usepackage{tikz}
\usepackage[nameinlink,noabbrev]{cleveref}
\usepackage{verbatim}
\usepackage{xcolor}

\newtheorem{theorem}{Theorem}[section]
\newtheorem{lemma}[theorem]{Lemma}
\newtheorem{proposition}[theorem]{Proposition}
\newtheorem{corollary}[theorem]{Corollary}

\newtheorem{question}[theorem]{Question}

\theoremstyle{definition}
\newtheorem{definition}[theorem]{Definition}

\newtheorem{remark}[theorem]{Remark}

\newcommand{\Aut}{\operatorname{Aut}}

\newcommand{\dist}{\operatorname{dist}}
\newcommand{\id}{1'}

\newcommand{\pijh}{p_{ij}^h}

\title{\textbf{Relation Algebra Representations from Distance-Regular Graphs}}
\author{Eli Atkins \\ Southern Illinois University}
\date{\today}

\begin{document}

\maketitle

\begin{abstract}
    We describe a general method for constructing representations of finite integral symmetric relation algebras from distance-regular graphs. Given a distance-regular graph of diameter $d$, the distances between vertices induces a coloring of the complete graph with $d$ colors, and we show that this coloring yields a representation of finite integral symmetric relation algebra on $d+1$ atoms. We then introduce a necessary and sufficient condition for when such a representation is algebraic, proving that this occurs if and only if the distance-regular graph is also distance-transitive.
    
    We study the diameter-3 case of this method in detail, and we express a condition for the representation's mandatory cycles in terms of the distance-regular graph's intersection array. We apply this result to give a positive answer to an open question of Roger Maddux; namely, whether the relation algebra $30_{65}$ has a representation on a finite set. The representation is given on 42 points, and arises from the second subconstituent of the Hoffman-Singleton graph. We further use this method to describe an infinite class of finite representations of $26_{65}$ and the smallest possible representation of $31_{65}$.
\end{abstract}

\section{Preliminary Material}\label{sec:preliminaries}

\subsection{Relation algebras and representations}

In the combinatorial representation theory of relation algebras, the main questions asked of a finite integral representable relation algebra $A$ are as follows:

\begin{enumerate}
    \item What is the spectrum of $A$, i.e., what is 
    \[
    \{ n \leq\omega : A \text{ has a square representation on a set of cardinality } n \}
    \]
    \item What are the ``nice'' representations of $A$?  ``Nice" representations include finite group representations (especially cyclic group representations), algebraic representations, and other highly symmetric representations. 
    \item Can the representations of $A$ be completely characterized? 
\end{enumerate}

For example, in \cite{AndMadd}, the spectrum of each finite three-atom relation algebra was determined. In \cite{Alm24}, the spectrum of cyclic group representations was determined for all seven finite integral symmetric relation algebras on three atoms.  In a work not yet submitted \cite{AlmAt}, the algebraic representations for those same seven algebras are mostly characterized. 

In this paper, we introduce two new types of ``nice'' representations, namely distance-regular representations and distance-transitive representations.  Such representations correspond to distance-regular  and distance-transitive graphs, respectively. Background on distance-regular and distance-transitive graphs may be found in the comprehensive survey paper of van Dam, Koolen, and Tanaka \cite{DistanceSurvey}, as well as the monograph by Brouwer, Cohen, and Neumaier \cite{DistanceBook}. We will prove that every distance-regular (distance-transitive) graph induces a representation of a finite integral relation algebra, and as a corollary give new representations of eight finite relation algebras. One of these, namely relation algebra $30_{65}$, is the first known finite representation.

This use of regular graph structures arising from association schemes and coherent configurations in constructing relation algebra representations has appeared previously in remarks of Steve Comer  (see \cite{Comer1} and \cite{Comer2}) including a simple example arising from a distance-regular graph, namely the cubical graph on 8 vertices. However, the general distance-regular framework and its consequences do not appear to have been more thoroughly developed in the literature.

The 2008 paper \cite{Alm08} contains the explanation of the graph-coloring/representation equivalence. In this paper, we consider a representation to be a coloring of a complete graph by the relations $R_i$, where the mandatory and forbidden triangle configurations agree with the composition table of the relation algebra. 

One may refer to \cite{Maddux} for the numbering system and the composition tables for some small relation algebras. 

There are $65$ finite symmetric integral relation algebras on 4 atoms. Each algebra has symmetric atoms $1'$, $a$, $b$, and $c$; and the distinct diversity cycles (up to equivalence) are
\[
aaa,\quad bbb,\quad ccc,\quad abb,\quad baa,\quad acc,\quad caa,\quad bcc,\quad cbb,\quad abc.
\]

We include the algebras mentioned in this paper, along with their mandatory diversity cycles, in Table \ref{tab:cycles}.

\begin{table}[hbt]
    \centering
    \begin{tabular}{c|cccccccccc}
          RA  & $aaa$ & $bbb$ & $ccc$ & $abb$ & $baa$ & $acc$ & $caa$ & $bcc$ & $cbb$ & $abc$  \\
          \hline
          $26_{65}$  & $aaa$ &  &  & $abb$ &  &  &  &  &  & $abc$  \\
          $27_{65}$  & $aaa$ & $bbb$ &  & $abb$ & $baa$ &  &  &  &  & $abc$  \\
          $28_{65}$  & $aaa$ &  &  & $abb$ &  & $acc$ &  &  &  & $abc$  \\
          $30_{65}$  & $aaa$ &  & $ccc$ & $abb$ & $baa$ &  & $caa$ &  &  & $abc$  \\
          $31_{65}$  & $aaa$ & $bbb$ & $ccc$ & $abb$ & $baa$ &  & $caa$ &  &  & $abc$  \\
          $57_{65}$  & $aaa$ & $bbb$ &  & $abb$ & $baa$ & $acc$ & $caa$ & $bcc$ &  & $abc$  \\
          $59_{65}$  & $aaa$ &  & $ccc$ & $abb$ & $baa$ & $acc$ & $caa$ & $bcc$ &  & $abc$  \\
          $61_{65}$  & $aaa$ & $bbb$ & $ccc$ & $abb$ & $baa$ & $acc$ & $caa$ & $bcc$ &  & $abc$  \\
    \end{tabular}
    \caption{Mandatory diversity cycles for selected algebras}
    \label{tab:cycles} 
\end{table}

In this setting, satisfying the mandatory and forbidden triangle properties is equivalent to the following condition: for all $h,i,j$ and all $\{x,y\}\in R_h$, the truth of the statement
\[
\exists \ z \in X \text{ such that } \{x,z\}\in R_i \text{ and } \{z,y\}\in R_j
\]
depends only on $h,i,j$, and not on the choice of $\{x,y\}\in R_h$.

Equivalently, for each triple $(h,i,j)$, either
\[
\{z\in X : \{x,z\}\in R_i \text{ and } \{z,y\}\in R_j\} = \emptyset
\]
for all $\{x,y\}\in R_h$, or it is nonempty for all such $\{x,y\}$. One can see how these conditions correspond to a composition table for any pair of relations.

A stronger condition is that for each $h,i,j$ the number
\[
\pijh = \left|\{z\in X : \{x,z\}\in R_i \text{ and } \{z,y\}\in R_j\}\right|
\]
is independent of the choice of $\{x,y\}\in R_h$. That is, the composition depends only on when this constant is nonzero. This stronger uniformity occurs in the distance-regular graph construction considered in this paper.

Throughout this paper, the represented algebras will always be finite, symmetric, and integral. We will also refer to the relations $R_i$ as color classes, as the graph representations are given by graph colorings.

\subsection{Algebraic representations}

\begin{definition}
Consider a representation over a finite set of a finite symmetric integral relation algebra with relations $R_i$. The representation is said to be \emph{algebraic} if each color class $R_i$ is an orbital of the automorphism group of the colored complete graph determined by the representation.
\end{definition}

Equivalently, the relations defined on edges, or color classes, coincide with the orbits of the automorphism group on ordered pairs of vertices.

\subsection{Distance-regular graphs}

\begin{definition}
A connected graph $\Gamma$ with diameter $d$ is \emph{distance-regular} if for all integers $h,i,j$ and all vertices $x,y$ with $\dist(x,y)=h$, the number
\[
p_{ij}^h = \left|\{z \in V(\Gamma): \dist(x,z)=i,\ \dist(y,z)=j\}\right| \tag{$\ast$}
\]
depends only on $h,i,j$ and not on the particular pair $(x,y)$. The numbers $\pijh$ are called the \emph{intersection numbers} of $\Gamma$.
\end{definition}

For a distance-regular graph of diameter $d$, one usually writes
\[
b_i = p_{1,i+1}^i,\qquad
c_i = p_{1,i-1}^i,\qquad
a_i = p_{1,i}^i
\]
for $0\le i\le d$, with the conventions
\[
c_0=0,\qquad b_d=0,\qquad a_i = b_0 - b_i - c_i.
\]
The array
\[
\{b_0,b_1,\dots,b_{d-1};c_1,c_2,\dots,c_d\}
\]
is the \emph{intersection array} of $\Gamma$.

Note that in a distance-regular graph, the full set of intersection numbers $\pijh$ is determined by the intersection array. For the diameter-3 case, we will derive and list these formulas later.

\subsection{Distance-transitive graphs}

\begin{definition}
A connected graph $\Gamma$ is \emph{distance-transitive} if whenever $x,y,u,v\in V(\Gamma)$ satisfy
\[
\dist(x,y)=\dist(u,v),
\]
there exists an automorphism $g\in \Aut(\Gamma)$ such that $g(x)=u$ and $g(y)=v$.
\end{definition}

\begin{remark}
Distance-transitivity implies distance-regularity. The converse is false in general.
\end{remark}

\section{The construction from distance-regular graphs}\label{sec:construction}

In this section we prove that every distance-regular graph gives rise to a finite symmetric integral relation algebra representation.

\begin{definition}
    Let $\Gamma$ be a finite connected distance-regular graph of diameter $d$, and let $X=V(\Gamma)$, the vertex set of $\Gamma$. For each $i\in \{0,1,\dots,d\}$ define the relation $R_i$ to be pairs of vertices at distance $i$ in $\Gamma$. Thus $R_0$ is the identity relation, and for $i\ge 1$, the relation $R_i$ corresponds to the set of ordered pairs of vertices at graph distance $i$.
\end{definition}

\begin{theorem}\label{thm:mainconstruction}
Let $\Gamma$ be a finite connected distance-regular graph of diameter $d$. Then the relations
\[
R_0,R_1,\dots,R_d
\]
yield a representation of a finite symmetric integral relation algebra with exactly $d+1$ atoms.
\end{theorem}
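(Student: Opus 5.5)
The plan is to build the relation algebra directly from the colored complete graph and then check that $X$ represents it. Set $A$ to be the Boolean algebra of all unions of the relations $R_0,\dots,R_d$ on $X$. We show that $A$ is a subalgebra of the full set relation algebra $\mathrm{Re}(X)$, meaning it is closed under union, complement, converse, composition, and contains the identity. Then $A$ is a representable relation algebra, and its inclusion into $\mathrm{Re}(X)$ is a square representation on $X$. Equivalently, in the graph-coloring language of Section~\ref{sec:preliminaries}, we check that the coloring of the complete graph on $X$ by $R_1,\dots,R_d$ has well-defined mandatory and forbidden triangles. The argument has four steps.

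\emph{Step 1: atoms and Boolean structure.} Since $\Gamma$ is connected with diameter $d$, the relations $R_0,\dots,R_d$ partition $X\times X$. Each $R_i$ is nonempty: pick a geodesic of length $d$ and take its vertices at distance $i$. So the unions of the $R_i$ form a Boolean algebra with exactly the $d+1$ atoms $R_0,\dots,R_d$, and it contains $\emptyset$ and $X\times X$. $R_0$ is the identity relation, and each $R_i$ is symmetric because graph distance is. Hence converse fixes every atom, so $A$ is closed under converse and every element is symmetric.

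\emph{Step 2: composition.} This is the key step. For atoms, $(x,y)\in R_i\circ R_j$ exactly when some $z$ has $\dist(x,z)=i$ and $\dist(z,y)=j$. If $\dist(x,y)=h$, the number of such $z$ is $p_{ij}^h$ by $(\ast$), which depends only on $h$. Therefore
\[
R_i\circ R_j=\bigcup\{R_h : p_{ij}^h>0\},
\]
which lies in $A$. Composition distributes over unions, so $A$ is closed under composition. This is exactly the uniformity condition stated in Section~\ref{sec:preliminaries}: for each triple $(h,i,j)$ the witness set is either empty for every pair in $R_h$ or nonempty for every such pair. So a cycle $(h,i,j)$ is mandatory when $p_{ij}^h>0$ and forbidden otherwise.

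\emph{Step 3: integrality.} $R_0$ is an atom, and the identity $1'$ is the atom $R_0$. Equivalently, $R_i\circ R_j=\emptyset$ forces one of $R_i,R_j$ to be empty, which never happens here. So $A$ is integral, and since every atom is symmetric, $A$ is a finite symmetric integral relation algebra with $d+1$ atoms.

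\emph{Step 4: conclusion.} A subalgebra of $\mathrm{Re}(X)$ is a relation algebra, and the identity embedding is a representation. I expect no real obstacle here. The only care needed is in Step 2, namely applying $(\ast)$ with the pair taken as ordered (which is harmless by symmetry), and in noting that all $d+1$ relations are nonempty so that the atom count is exactly $d+1$.
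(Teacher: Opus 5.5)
Your proof is correct and follows essentially the same route as the paper: the $R_i$ partition $X\times X$, $R_0$ is the identity, each $R_i$ is symmetric, and distance-regularity makes the witness count $p_{ij}^h$ depend only on $h$, so $R_i\circ R_j=\bigcup\{R_h : p_{ij}^h>0\}$. The subalgebra-of-$\mathrm{Re}(X)$ framing and your explicit checks only spell out what the paper's proof leaves implicit: that each $R_i$ is nonempty, giving exactly $d+1$ atoms, and that $1'=R_0$ is an atom, giving integrality.
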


\begin{proof}
Let $X=V(\Gamma)$. Since $\Gamma$ is connected with diameter $d$, every ordered pair $(x,y)\in X\times X$ has a unique distance
\[
\dist(x,y)\in \{0,1,\dots,d\}.
\]
Hence the relations $R_0,R_1,\dots,R_d$ partition the edge set $E(\Gamma)$.

By definition, $R_0$ consists exactly of those pairs $(x,y)$ with $\dist(x,y)=0$, and this holds if and only if $x=y$. Thus
\[
R_0=\{(x,x):x\in X\},
\]
so $R_0$ is the identity relation on $X$.

Each $R_i$ is symmetric, since graph distance is symmetric:
\[
\dist(x,y)=\dist(y,x).
\]

Now fix $h,i,j\in \{0,1,\dots,d\}$ and choose any $(x,y)\in R_h$. Then
\[
\left|\{z\in X:(x,z)\in R_i \text{ and } (z,y)\in R_j\}\right|
=
\left|\{z\in X:\dist(x,z)=i,\ \dist(y,z)=j\}\right|.
\]
Because $\Gamma$ is distance-regular, this number depends only on $i,j,h$, not on the particular choice of $(x,y)\in R_h$. Denote it by $\pijh$.

Therefore relational composition is determined by these constants. Thus the relations $R_0,R_1,\dots,R_d$ form the atoms of a finite symmetric integral relation algebra representation on $X$. There are exactly $d+1$ such relations, so the represented algebra has exactly $d+1$ atoms.
\end{proof}

\medskip

The theorem may be interpreted as follows: each distance class in the graph becomes one color, and the intersection numbers of the graph determine the composition table of the resulting relation algebra.

\begin{corollary}
Every finite connected distance-regular graph of diameter $d$ induces a coloring of the complete graph on its vertex set with $d$ colors, and this coloring is a representation of a finite symmetric integral relation algebra.
\end{corollary}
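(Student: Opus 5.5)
This corollary is essentially a restatement of Theorem~\ref{thm:mainconstruction} in the graph-coloring language of \cite{Alm08}, so the plan is to translate rather than to prove anything new. Let $\Gamma$ be a finite connected distance-regular graph of diameter $d$ with vertex set $X$. Consider the complete graph $K_X$ on $X$. Color each unordered pair $\{x,y\}$ with $x\neq y$ by the value $\dist(x,y)\in\{1,\dots,d\}$. This is well defined because $\Gamma$ is connected and distance is symmetric. All $d$ colors actually occur: diameter $d$ means some pair realizes distance $d$, and a shortest path between that pair realizes every smaller distance. So $K_X$ is colored with exactly $d$ colors, and the color classes are the diversity relations $R_1,\dots,R_d$ of the definition preceding Theorem~\ref{thm:mainconstruction}.

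Next I would invoke Theorem~\ref{thm:mainconstruction}. It says that $R_0,R_1,\dots,R_d$ are the atoms of a representation of a finite symmetric integral relation algebra $A$ with $d+1$ atoms, and that the composition constants are the intersection numbers $\pijh$. The identity atom $R_0$ is the diagonal, which is not an edge color. The remaining atoms are exactly the $d$ colors above.

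It remains to check that this coloring satisfies the mandatory and forbidden triangle conditions in the form stated in the preliminaries. A triangle colored $(h,i,j)$ is mandatory when $\pijh>0$ and forbidden otherwise. The key point is that, for every edge $\{x,y\}$ of color $h$, the number of $z$ with $\{x,z\}$ colored $i$ and $\{z,y\}$ colored $j$ equals $\pijh$. This is independent of the edge by distance-regularity. Hence the existence of such a $z$ depends only on $(h,i,j)$, which is precisely the equivalence given in the preliminaries.

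I expect no step to be a genuine obstacle. The only care needed is bookkeeping. First, one must keep the identity atom separate from the edge colors, so that the count is $d$ colors rather than $d+1$. Second, the cases involving $R_0$ (where $i=0$ or $j=0$) are automatic: $p_{0j}^h=\delta_{jh}$. These cases are therefore consistent with the identity behaving as a unit, and they need no separate treatment in the coloring.
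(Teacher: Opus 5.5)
Your proposal is correct and takes the same approach as the paper, which proves the corollary in one line as a reformulation of Theorem~\ref{thm:mainconstruction}. Your version simply makes that reformulation explicit: you set the identity atom $R_0$ aside, leaving the $d$ diversity atoms as edge colors, note that all $d$ colors appear along a geodesic realizing the diameter, and observe that the triangle condition holds because the counts $\pijh$ do not depend on the chosen edge.
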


\begin{proof}
This is just a reformulation of \Cref{thm:mainconstruction}.
\end{proof}

\section{When is the representation algebraic?}\label{sec:algebraic}

We now determine exactly when the representation arising from \Cref{thm:mainconstruction} is algebraic.

\begin{theorem}\label{thm:algebraiciff}
Let $\Gamma$ be a finite connected distance-regular graph, and let
\[
R_0,R_1,\dots,R_d
\]
be the relations defined by graph distance. Then the resulting representation is algebraic if and only if $\Gamma$ is distance-transitive.
\end{theorem}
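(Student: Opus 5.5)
The key step is to show that the automorphism group of the colored complete graph on $X=V(\Gamma)$ coincides with $\Aut(\Gamma)$. Write $\mathcal{K}$ for the complete graph on $X$ with each pair $\{x,y\}$ colored by $\dist(x,y)$. An automorphism of $\mathcal{K}$ is a permutation $g$ of $X$ with $(g(x),g(y))\in R_i$ whenever $(x,y)\in R_i$, for every $i$. Once $\Aut(\mathcal{K})=\Aut(\Gamma)$ is established, the equivalence reduces to comparing definitions. Being algebraic means each $R_i$ is a single orbit of $\Aut(\mathcal{K})$ on ordered pairs. Being distance-transitive means $\Aut(\Gamma)$ is transitive on the ordered pairs at each fixed distance, i.e.\ on each $R_i$.

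\textbf{Step 1: $\Aut(\mathcal{K})\subseteq\Aut(\Gamma)$.} Let $g\in\Aut(\mathcal{K})$. Since $g$ preserves $R_1$, which is exactly the edge set of $\Gamma$, and $g$ is a bijection, $g$ is a graph automorphism of $\Gamma$.

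\textbf{Step 2: $\Aut(\Gamma)\subseteq\Aut(\mathcal{K})$.} Let $g\in\Aut(\Gamma)$. A graph automorphism maps paths to paths of the same length, and so does $g^{-1}$. Hence $\dist(g(x),g(y))=\dist(x,y)$ for all $x,y$, so $g$ preserves every $R_i$.

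\textbf{Step 3: conclude.} Suppose $\Gamma$ is distance-transitive. Take any $(x,y),(u,v)\in R_i$. Then some $g\in\Aut(\Gamma)=\Aut(\mathcal{K})$ sends $(x,y)$ to $(u,v)$. Each $R_i$ is $\Aut(\mathcal{K})$-invariant by Step 2, so it is exactly one orbital, and the representation is algebraic. Conversely, suppose the representation is algebraic. Then each $R_i$ is an orbital of $\Aut(\mathcal{K})=\Aut(\Gamma)$, which is precisely the distance-transitivity condition.

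The argument is essentially routine. The one point needing care is Step 2, the preservation of distance under automorphisms, which uses connectivity so that all distances are finite. A second point is that the definition of algebraic refers to orbits on ordered pairs. Each $R_i$ is symmetric, so one should check that the ordered-pair orbitals coincide with the $R_i$ and are not merely unions of them. Transitivity on ordered pairs at distance $i$ gives exactly this, since both $(x,y)$ and $(y,x)$ lie in $R_i$ and the definition of distance-transitivity applies to ordered quadruples. The case $i=0$, where $R_0$ is the diagonal, is covered by vertex-transitivity, which follows from distance-transitivity at distance $0$.
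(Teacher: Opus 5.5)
Your proposal is correct and matches the paper's proof: both rest on the facts that a color-preserving permutation preserves color $1$, which is adjacency in $\Gamma$, and that a graph automorphism preserves distance. The only difference is packaging, since you first record these as the equality $\Aut(\mathcal{K})=\Aut(\Gamma)$, while the paper applies each fact directly within the two directions of the equivalence.
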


\begin{proof}
Let $X=V(\Gamma)$, and let $\mathcal{G}$ denote the complete graph on $X$ whose edges are colored according to graph distance in $\Gamma$.

Suppose first that the representation is algebraic. Then each relation $R_i$ is an orbital of the automorphism group of the colored graph $\mathcal{G}$. In particular, for any two ordered pairs $(x,y)$ and $(u,v)$ with
\[
\dist(x,y)=\dist(u,v)=i,
\]
there exists a color-preserving automorphism $g$ of $\mathcal{G}$ such that
\[
g(x)=u,\qquad g(y)=v.
\]
Since color $1$ in $\mathcal{G}$ is exactly adjacency in $\Gamma$, every color-preserving automorphism of $\mathcal{G}$ is an automorphism of $\Gamma$. Hence for any two ordered pairs at the same graph distance, there is an automorphism of $\Gamma$ carrying one to the other. Thus $\Gamma$ is a distance-transitive graph.

Conversely, suppose that $\Gamma$ is distance-transitive. Let $(x,y)$ and $(u,v)$ be ordered pairs satisfying
\[
\dist(x,y)=\dist(u,v)=i.
\]
By distance-transitivity, there exists $g\in \Aut(\Gamma)$ such that
\[
g(x)=u,\qquad g(y)=v.
\]
Every automorphism of $\Gamma$ preserves graph distance, so $g$ preserves every color class in the distance coloring of the complete graph. Therefore each relation $R_i$ is a single orbital of the color-preserving automorphism group. Hence the representation is algebraic.
\end{proof}

\medskip

Note that the diameter-$2$ case of \Cref{thm:algebraiciff} is the strongly regular graph situation: algebraic two-color representations arise precisely from those strongly regular graphs that are distance-transitive, which is equivalently the rank-$3$ strongly-regular graphs. More information on this case can be found in \cite{AlmAt}.

\section{The diameter-3 case}\label{sec:diameter3}

From this point onward, let $\Gamma$ be a distance-regular graph of diameter $3$. The induced representation has an identity atom and diversity atoms
\[
a,\ b,\ c,
\]
where, in this paper, the atoms correspond to distances as follows:
\[
b \leftrightarrow \text{distance }1,\qquad
a \leftrightarrow \text{distance }2,\qquad
c \leftrightarrow \text{distance }3.
\]

\subsection{Setup and notation}

Let the intersection array of $\Gamma$ be
\[
\{b_0,b_1,b_2;c_1,c_2,c_3\}.
\]
As was mentioned previously,
\[
a_i = b_0 - b_i - c_i \qquad (0\le i\le 3),
\]
with $b_3=0$ and $c_0=0$.

For a fixed pair $(x,y)$ with $\dist(x,y)=h$, define
\[
N_{ij}(h)
=
\left|\{z\in V(\Gamma): \dist(x,z)=i,\ \dist(y,z)=j\}\right|.
\]
By distance-regularity,
\[
N_{ij}(h)=p_{ij}^h,
\]
and this quantity depends only on $h,i,j$; see \cite[pp.~15]{DistanceSurvey}.

Following Maddux's composition tables, we use the labels
\[
aaa,\quad bbb,\quad ccc,\quad abb,\quad baa,\quad acc,\quad caa,\quad bcc,\quad cbb,\quad abc.
\]
for the possible diversity cycle types. Since we are concerned only with the existence or nonexistence of these triangle configurations in the graph coloring, and not with counts relative to a chosen base edge, symmetry implies that the ordering of the diversity cycles is irrelevant. Hence there are only these ten possible cycle types.

\subsection{General principle}

\begin{proposition}\label{prop:trianglepositivity}
Let $x,y$ be vertices with $\dist(x,y)=h$. A triangle with $\{x,y\}$ as its base, and its other edges formed by a point $z$ such that $\dist(x,z)=i$ and $\dist(y,z)=j$ occurs if and only if
\[
p_{ij}^h > 0.
\]
\end{proposition}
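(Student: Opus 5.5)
The plan is to unwind the definitions so that the statement reduces to the fact that a nonnegative integer counting a set is positive exactly when the set is nonempty. Fix vertices $x,y$ with $\dist(x,y)=h$. By definition, a triangle with base $\{x,y\}$ and the prescribed colors on its other two edges is a vertex $z\in V(\Gamma)$ such that $\{x,z\}$ has color $i$ and $\{z,y\}$ has color $j$. In the distance coloring of \Cref{thm:mainconstruction}, this means exactly $\dist(x,z)=i$ and $\dist(y,z)=j$, using that graph distance is symmetric. So the triangle occurs if and only if the set
\[
S_{ij}(x,y)=\{z\in V(\Gamma):\dist(x,z)=i,\ \dist(y,z)=j\}
\]
is nonempty.

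Next, I would invoke distance-regularity through the notation of this section: $|S_{ij}(x,y)|=N_{ij}(h)=p_{ij}^h$, and this count does not depend on the chosen pair $(x,y)$. Since $V(\Gamma)$ is finite, $S_{ij}(x,y)$ is nonempty exactly when its cardinality $p_{ij}^h$ is a positive integer. This gives both directions of the equivalence at once.

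I would then add a remark that the answer depends only on $(h,i,j)$. Consequently, if some pair at distance $h$ supports such a triangle, then every pair at distance $h$ does. This is precisely the mandatory/forbidden triangle condition stated in \Cref{sec:preliminaries}, so the triple $(h,i,j)$ is a mandatory cycle of the represented algebra when $p_{ij}^h>0$ and a forbidden one when $p_{ij}^h=0$.

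There is no real obstacle here. The only point needing care is the bookkeeping: checking that the ordered or unordered orientation of the triangle's edges matches the ordered indices in $p_{ij}^h$. Symmetry of the relations takes care of this.
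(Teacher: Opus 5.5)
Your proposal is correct and matches the paper's proof: identify the triangles over $\{x,y\}$ with the set of $z$ satisfying $\dist(x,z)=i$ and $\dist(y,z)=j$, note that this set has size $N_{ij}(h)=p_{ij}^h$, and conclude that it is nonempty exactly when $p_{ij}^h>0$. Your extra remark, that this gives the mandatory/forbidden dichotomy depending only on $(h,i,j)$, is accurate; the paper leaves it implicit at this point.
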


\begin{proof}
By definition, a vertex $z$ forms such a triangle with the base pair $(x,y)$ exactly when
\[
\dist(x,z)=i,\qquad \dist(y,z)=j.
\]
The number of such vertices is precisely
\[
\left|\{z\in V(\Gamma): \dist(x,z)=i,\ \dist(y,z)=j\}\right| = N_{ij}(h)=p_{ij}^h.
\]
Thus such a triangle exists if and only if $p_{ij}^h>0$.
\end{proof}

\medskip

Thus the problem of determining which triangles occur is reduced to determining which intersection numbers are positive.

\begin{lemma}\label{lem:khpijk}
Let $\Gamma$ be a distance-regular graph of diameter $d$, and for $0\le r\le d$ let
\[
\Gamma_r(x):=\{y\in V(\Gamma):\dist(x,y)=r\},
\qquad
k_r:=|\Gamma_r(x)|.
\]
Then $k_r$ is independent of the choice of $x$. Moreover, for all $h,i,j$,
\begin{equation}\label{kp}
    k_h\,p_{ij}^h = k_i\,p_{hj}^i.
\end{equation}
\end{lemma}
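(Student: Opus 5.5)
The plan is to get both claims directly from the definition of distance-regularity ($\ast$): the first by specializing to $h=0$, the second by double counting.

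For the first claim, take the base pair $(x,x)$, which has $\dist(x,x)=0$. Then
\[
p_{rr}^0=\left|\{z\in V(\Gamma):\dist(x,z)=r,\ \dist(x,z)=r\}\right| = |\Gamma_r(x)|.
\]
Since $p_{rr}^0$ depends only on $r$ and not on the chosen pair, $k_r=p_{rr}^0$ is independent of $x$.

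For the identity \eqref{kp}, fix a vertex $x$ and count the set
\[
T=\{(y,z)\in V(\Gamma)^2:\dist(x,y)=h,\ \dist(x,z)=i,\ \dist(y,z)=j\}
\]
in two ways.

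Counting by $y$ first: there are $k_h$ choices of $y\in\Gamma_h(x)$. For each such $y$, the pair $(x,y)$ is at distance $h$, so the number of $z$ with $\dist(x,z)=i$ and $\dist(y,z)=j$ is $p_{ij}^h$ by ($\ast$). Hence $|T|=k_h\,p_{ij}^h$.

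Counting by $z$ first: there are $k_i$ choices of $z\in\Gamma_i(x)$. For each such $z$, the pair $(x,z)$ is at distance $i$, and we need the number of $y$ with $\dist(x,y)=h$ and $\dist(z,y)=j$. This is exactly ($\ast$) applied to the base pair $(x,z)$, with $y$ playing the role of the third vertex, so it equals $p_{hj}^i$. Hence $|T|=k_i\,p_{hj}^i$. Equating the two counts gives \eqref{kp}.

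The only delicate point is matching the index order in the second count with the definition ($\ast$). The first lower index must record the distance from the first base vertex $x$ to the free vertex $y$, and the second lower index the distance from the second base vertex $z$ to $y$. With the base pair written as $(x,z)$, this gives $p_{hj}^i$ and not $p_{jh}^i$. Symmetry of $\dist$ is used tacitly in writing $\dist(y,z)=\dist(z,y)$. Everything else is routine.
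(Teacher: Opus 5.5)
Your proof is correct and follows the paper's argument: the identity comes from double counting the same set of pairs $(y,z)$ with $\dist(x,y)=h$, $\dist(x,z)=i$, $\dist(y,z)=j$, and your index order $p_{hj}^i$ in the second count is right. The only difference is that you prove $k_r$ is independent of $x$ directly, by noting $k_r=p_{rr}^0$ via the base pair $(x,x)$ in ($\ast$), whereas the paper just cites the survey for this fact.
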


\begin{proof}
The independence of $k_r$ from the choice of $x$ is standard for distance-regular graphs; see \cite[p.~8]{DistanceSurvey}.

Towards proving (\ref{kp}), fix $x\in V(\Gamma)$, and consider the set
\[
S=\{(y,z)\in V(\Gamma)\times V(\Gamma): \dist(x,y)=h,\ \dist(x,z)=i,\ \dist(y,z)=j\}.
\]
We count $|S|$ in two ways.

First, if $y\in \Gamma_h(x)$, then the number of vertices $z$ such that
\[
\dist(x,z)=i,\qquad \dist(y,z)=j
\]
is, by the definition of the intersection numbers, equal to $p_{ij}^h$. Since $|\Gamma_h(x)|=k_h$, this gives
\[
|S|=k_h\,p_{ij}^h.
\]

Second, if $z\in \Gamma_i(x)$, then the number of vertices $y$ such that
\[
\dist(x,y)=h,\qquad \dist(y,z)=j
\]
is $p_{hj}^i$. Since $|\Gamma_i(x)|=k_i$, this gives
\[
|S|=k_i\,p_{hj}^i.
\]

Equating the two expressions for $|S|$ yields
\[
k_h\,p_{ij}^h = k_i\,p_{hj}^i.
\]
\end{proof}

This equality is simple but useful.

\begin{corollary}\label{cor:layer-sizes}
Let $\Gamma$ be a distance-regular graph of diameter $3$. Then
\[
k_1 = b_0,\qquad
k_2 = \frac{b_0 b_1}{c_2},\qquad
k_3 = \frac{b_0 b_1 b_2}{c_2 c_3}.
\]
\end{corollary}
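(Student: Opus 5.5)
The plan is to apply \Cref{lem:khpijk} repeatedly, each time with a triple $(h,i,j)$ whose two intersection numbers are already known entries of the intersection array. The natural choice is $j=1$, with $h=r$ and $i=r+1$. For $r\in\{0,1,2\}$, identity (\ref{kp}) then reads
\[
k_r\,p_{r+1,1}^{r} = k_{r+1}\,p_{r,1}^{r+1}.
\]

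First I will check that these two numbers are $b_r$ and $c_{r+1}$. By definition, $p_{r+1,1}^r$ counts the vertices $z$ with $\dist(x,z)=r+1$ and $\dist(y,z)=1$, where $\dist(x,y)=r$. This is the number of neighbours of $y$ one step farther from $x$, which is $b_r=p_{1,r+1}^r$ after swapping the roles of $i$ and $j$. The swap is justified because $p_{ij}^h=p_{ji}^h$: exchange $x$ and $y$ and use the symmetry of distance. In the same way, $p_{r,1}^{r+1}=p_{1,r}^{r+1}=c_{r+1}$.

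This gives the recurrence $k_{r+1}=k_r b_r/c_{r+1}$. The base case is $k_0=1$, since $\Gamma_0(x)=\{x\}$. Because $\Gamma$ is connected of diameter $3$, we have $c_{r+1}\ge 1$ for $r=0,1,2$ (a shortest path supplies a neighbour one step closer), so each division is legitimate. Also $c_1=1$, since the only vertex at distance $0$ from $x$ is $x$ itself. Iterating then gives:
\begin{itemize}
\item $k_1=b_0/c_1=b_0$;
\item $k_2=b_0b_1/c_2$;
\item $k_3=b_0b_1b_2/(c_2c_3)$.
\end{itemize}

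There is no real obstacle here. The only point needing care is the index convention: the paper's definition of $b_i$ and $c_i$ puts the $1$ in the first subscript, while the lemma, with $j=1$, puts it second. So the symmetry $p_{ij}^h=p_{ji}^h$ should be stated explicitly rather than used silently. As a sanity check, the same identity with $(h,i,j)=(0,r,r)$ is trivial, while the choice $(r,r+1,1)$ is exactly the standard edge count between consecutive layers $\Gamma_r(x)$ and $\Gamma_{r+1}(x)$.
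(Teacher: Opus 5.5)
Your proof is correct and follows the standard argument the paper intends. The paper states this as a corollary of \Cref{lem:khpijk} and defers to the cited survey, where the same relation $k_r b_r = k_{r+1} c_{r+1}$ is iterated from $k_0 = c_1 = 1$; your choice $(h,i,j)=(r,r+1,1)$ is exactly this count of edges between the consecutive layers $\Gamma_r(x)$ and $\Gamma_{r+1}(x)$, and your explicit checks of $p_{ij}^h=p_{ji}^h$ and $c_{r+1}\ge 1$ supply the details the paper leaves implicit.
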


The proof of this corollary may be found in \cite[pp.~11]{DistanceSurvey}.

The proofs of the following three propositions involve only standard manipulations of intersection number identities for distance-regular graphs. For readability, these derivations are found in Appendix A.

\subsubsection*{Base color $b$ (distance 1)}

For a base pair at distance $1$, the relevant counts are
\[
p_{11}^1,\ p_{12}^1,\ p_{13}^1,\ p_{22}^1,\ p_{23}^1,\ p_{33}^1.
\]

\begin{proposition}\label{prop:baseb}
For a base pair at distance $1$, the possible triangle types and the corresponding intersection numbers are:
\begin{align*}
bbb &: p_{11}^1 = a_1,\\
abb &: p_{12}^1 = b_1,\\
baa &: p_{22}^1 = \frac{b_1a_2}{c_2},\\
bcc &: p_{33}^1 = \frac{b_1b_2a_3}{c_2c_3},\\
cbb &: p_{13}^1 = 0,\\
abc &: p_{23}^1 = \frac{b_1b_2}{c_2}.
\end{align*}
In particular, a listed triangle type occurs if and only if the corresponding intersection number is nonzero.
\end{proposition}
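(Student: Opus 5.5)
The plan is to compute each of the six intersection numbers $p_{ij}^1$ directly from the intersection array. The tools are \Cref{lem:khpijk}, the layer sizes in \Cref{cor:layer-sizes}, and the row-sum identities $\sum_j p_{ij}^h = k_i$. The final clause, that a triangle type occurs if and only if its number is nonzero, is then immediate from \Cref{prop:trianglepositivity}. Throughout I fix $x,y$ with $\dist(x,y)=1$ and count the vertices $z$ with prescribed distances to $x$ and $y$.

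The first three values follow from definitions and the triangle inequality. By definition $p_{11}^1=a_1$ and $p_{12}^1=b_1$. Since $|\dist(x,z)-\dist(y,z)|\le 1$, we get $p_{13}^1=0$, and also $p_{ij}^1=0$ whenever $|i-j|\ge 2$.

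For the remaining numbers I will use symmetry, $p_{ij}^1=p_{ji}^1$ by swapping $x$ and $y$, together with \Cref{lem:khpijk} in the form $k_1 p_{ij}^1 = k_i p_{1j}^i$. Applied with $i=j=2$, this gives
\[
p_{22}^1 = \frac{k_2\, p_{12}^2}{k_1} = \frac{k_2 a_2}{b_0} = \frac{b_1 a_2}{c_2},
\]
using $k_2=b_0b_1/c_2$. With $i=2$, $j=3$, it gives
\[
p_{23}^1 = \frac{k_2 p_{13}^2}{k_1} = \frac{k_2 b_2}{b_0} = \frac{b_1b_2}{c_2}.
\]
With $i=j=3$, it gives
\[
p_{33}^1 = \frac{k_3 p_{13}^3}{k_1} = \frac{k_3 a_3}{b_0} = \frac{b_1b_2a_3}{c_2c_3}.
\]
Here I use that $p_{1,i}^i=a_i$ and $p_{1,i+1}^i=b_i$. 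These identifications hold because $\dist(x,z)=i$ and $\dist(z,y')=1$ with $\dist(x,y')=j$ is exactly the defining count for $a_i$ or $b_i$, after the symmetry $p^h_{ij}=p^h_{ji}$.

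As a sanity check, I will verify that the row sums for $i=2$ match: $p_{21}^1+p_{22}^1+p_{23}^1=k_2$. This reduces to $c_2+a_2+b_2=b_0$, which is the defining relation for $a_2$.

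The main obstacle is bookkeeping rather than ideas. The index convention in \Cref{lem:khpijk} ($k_h p_{ij}^h = k_i p_{hj}^i$) must be instantiated correctly, with the roles of $h$ and $i$ swapped so that the known quantity $p_{1j}^i$ appears. The cycle labels must also be matched to distances ($b\leftrightarrow1$, $a\leftrightarrow2$, $c\leftrightarrow3$): for example, $abb$ corresponds to a base edge of distance $1$ with the other sides of distances $1$ and $2$, and $abc$ to sides of distances $2$ and $3$. I will state these correspondences explicitly before applying \Cref{prop:trianglepositivity}.
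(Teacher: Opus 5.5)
Your proposal is correct and follows the paper's proof essentially line for line. You get $p_{11}^1$ and $p_{12}^1$ by definition, $p_{13}^1=0$ by the triangle inequality, and $p_{22}^1$, $p_{23}^1$, $p_{33}^1$ from \Cref{lem:khpijk} with $h=1$ (i.e.\ $k_1p_{ij}^1=k_ip_{1j}^i$), using $p_{12}^2=a_2$, $p_{13}^2=b_2$, $p_{13}^3=a_3$ and \Cref{cor:layer-sizes}. The row-sum check and the appeal to symmetry are harmless extras; $p_{1,i}^i=a_i$ and $p_{1,i+1}^i=b_i$ are simply the paper's definitions, so no symmetry argument is needed to justify them.
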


\subsubsection*{Base color $a$ (distance 2)}

For a base pair at distance $2$, the relevant counts are
\[
p_{11}^2,\ p_{12}^2,\ p_{13}^2,\ p_{22}^2,\ p_{23}^2,\ p_{33}^2.
\]

\begin{proposition}\label{prop:basea}
For a base pair at distance $2$, the possible triangle types and the corresponding intersection numbers are:
\begin{align*}
aaa &: p_{22}^2 = \frac{b_1c_2+a_2^2+b_2c_3-b_0-a_1a_2}{c_2},\\
abb &: p_{11}^2 = c_2,\\
baa &: p_{12}^2 = a_2,\\
acc &: p_{33}^2 = \frac{b_0b_1b_2}{c_2c_3}-b_2-\frac{b_2(a_2+a_3-a_1)}{c_2},\\
caa &: p_{23}^2 = \frac{b_2(a_2+a_3-a_1)}{c_2},\\
abc &: p_{13}^2 = b_2.
\end{align*}
In particular, a listed triangle type occurs if and only if the corresponding intersection number is nonzero.
\end{proposition}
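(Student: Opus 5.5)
The plan is to compute the six numbers $p_{ij}^2$ in two groups. The ``linear'' ones $p_{11}^2,p_{12}^2,p_{13}^2$ come straight from the definitions. The ``quadratic'' ones $p_{22}^2,p_{23}^2,p_{33}^2$ come from the distance-matrix algebra together with the row-sum identities. The final sentence, that a listed type occurs if and only if its number is nonzero, is then immediate from \Cref{prop:trianglepositivity}.

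\textbf{First group.} Fix $(x,y)$ with $\dist(x,y)=2$. The number $p_{1j}^2$ counts neighbours $z$ of $x$ with $\dist(y,z)=j$. Since distance is symmetric, $p_{1j}^2=p_{j1}^2$. Reading it from the point of view of the vertex at distance $2$, this is exactly the definition of $c_2$, $a_2$, $b_2$ for $j=1,2,3$. This gives $p_{11}^2=c_2$, $p_{12}^2=a_2$ and $p_{13}^2=b_2$.

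\textbf{Second group.} Let $A_i$ be the distance-$i$ matrices, so that $A_iA_j=\sum_h p_{ij}^hA_h$. I will use the three-term recurrence $A_1A_i=b_{i-1}A_{i-1}+a_iA_i+c_{i+1}A_{i+1}$, which is just the definition of $a_i,b_i,c_i$ applied entrywise.
\begin{itemize}
\item For $i=1$ the recurrence gives $A_2=(A_1^2-a_1A_1-b_0I)/c_2$.
\item For $i=3$ it gives $A_1A_3=b_2A_2+a_3A_3$.
\item For $i=2$ it gives $A_1A_2=b_1A_1+a_2A_2+c_3A_3$.
\end{itemize}
Substituting the first of these,
\[
A_2A_3=\tfrac{1}{c_2}\bigl(A_1(A_1A_3)-a_1A_1A_3-b_0A_3\bigr).
\]
Expanding $A_1(A_1A_3)=b_2A_1A_2+a_3A_1A_3$ with the second and third identities and reading off the coefficient of $A_2$ gives
\[
p_{23}^2=\frac{b_2(a_2+a_3-a_1)}{c_2}.
\]

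\textbf{Row sums.} Next I use $\sum_j p_{ij}^2=k_i$, which holds because the sets $\Gamma_i(x)$ are partitioned by distance to $y$. The values of $k_i$ come from \Cref{cor:layer-sizes}.
\begin{itemize}
\item Row $i=3$ gives $p_{33}^2=k_3-b_2-p_{23}^2$, which is exactly the stated formula.
\item Row $i=2$ gives $p_{22}^2=k_2-a_2-p_{23}^2$. Here I use $p_{21}^2=a_2$, not $c_2$, because $p_{21}^2=p_{12}^2$.
\end{itemize}

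\textbf{Main obstacle.} The step I expect to be fiddly is showing that $k_2-a_2-p_{23}^2$, with $k_2=b_0b_1/c_2$, equals the displayed expression
\[
\frac{b_1c_2+a_2^2+b_2c_3-b_0-a_1a_2}{c_2}.
\]
I plan to check this by computing $p_{22}^2$ independently as the $A_2$-coefficient of $A_2^2=A_2(A_1^2-a_1A_1-b_0I)/c_2$. Expanding with the recurrences gives numerator $b_1c_2+a_2^2+b_2c_3-a_1a_2-b_0$, which confirms the formula directly. The row-sum version then reduces to the identity $b_0b_1-a_2c_2-b_2(a_2+a_3-a_1)=b_1c_2+a_2^2+b_2c_3-b_0-a_1a_2$. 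I would verify this using $a_i=b_0-b_i-c_i$ and $a_3=b_0-c_3$. If that verification gets messy, I will simply present the matrix-coefficient computation as the proof of the $p_{22}^2$ formula.
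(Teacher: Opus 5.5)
Your overall route matches the paper's. You read $p_{11}^2,p_{12}^2,p_{13}^2$ off the definitions, obtain $p_{22}^2$ from the distance-matrix algebra, and obtain $p_{33}^2$ from the row sum $k_3=b_2+p_{23}^2+p_{33}^2$. The one genuine error is your row sum for $i=2$. The sum $\sum_j p_{2j}^2=k_2$ includes the term $j=0$, and $p_{20}^2=1$, because $y$ itself is at distance $2$ from $x$ and distance $0$ from $y$. So the correct relation is $p_{22}^2=k_2-1-a_2-p_{23}^2$, not $k_2-a_2-p_{23}^2$.

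Consequently the identity you intend to verify, $b_0b_1-a_2c_2-b_2(a_2+a_3-a_1)=b_1c_2+a_2^2+b_2c_3-b_0-a_1a_2$, is false: its left side exceeds the right side by exactly $c_2$. The $6$-cycle, with array $\{2,1,1;1,1,2\}$, shows this. The left side is $2$ and the right side is $1$. Indeed $p_{22}^2=1$, since for $x=0$, $y=2$ only the vertex $4$ qualifies, while $k_2-a_2-p_{23}^2=2$. So that check would not merely get messy; it would fail. With the $-1$ restored, the left side gains an extra $-c_2$, and the identity then holds, using $c_1=1$ and $a_i+b_i+c_i=b_0$.

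Your independent computation of $p_{22}^2$ as the $A_2$-coefficient of $c_2^{-1}(A_1^2-a_1A_1-b_0I)A_2$ is correct. In this ordering it needs only associativity and the recurrence; in your ordering you also need $A_2A_1=A_1A_2$. It is exactly the paper's argument, which compares $A_2$-coefficients in $(A_1A_1)A_2=A_1(A_1A_2)$. So that computation, not the row sum, should be what you present, and with it the proof is complete.

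Your treatment of $p_{23}^2$ differs slightly from the paper's. The paper reads $p_{22}^3$ off the $A_3$-coefficient of the same associativity identity, then transfers it via $k_3p_{22}^3=k_2p_{23}^2$ and $k_3/k_2=b_2/c_3$. You instead expand $A_2A_3$ directly after substituting for $A_2$. Both are valid. Yours avoids the double-counting lemma and the layer sizes at this step. The paper's gets $p_{22}^3$ at no extra cost, and it reuses that value in \Cref{prop:basec}.
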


\subsubsection*{Base color $c$ (distance 3)}

For a base pair at distance $3$, the relevant counts are
\[
p_{11}^3,\ p_{12}^3,\ p_{13}^3,\ p_{22}^3,\ p_{23}^3,\ p_{33}^3.
\]

\begin{proposition}\label{prop:basec}
For a base pair at distance $3$, the possible triangle types and the corresponding intersection numbers are:
\begin{align*}
ccc &: p_{33}^3 = \frac{b_0b_1b_2}{c_2c_3}-1-a_3-\frac{a_3(a_3-a_1)+b_2c_3-b_0}{c_2},\\
acc &: p_{23}^3 = \frac{a_3(a_3-a_1)+b_2c_3-b_0}{c_2},\\
caa &: p_{22}^3 = \frac{c_3(a_2+a_3-a_1)}{c_2},\\
bcc &: p_{13}^3 = a_3,\\
cbb &: p_{11}^3 = 0,\\
abc &: p_{12}^3 = c_3.
\end{align*}
In particular, a listed triangle type occurs if and only if the corresponding intersection number is nonzero.
\end{proposition}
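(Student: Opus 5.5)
We compute the six intersection numbers $p_{ij}^3$ ($1\le i\le j\le 3$) for a base pair $(x,y)$ with $\dist(x,y)=3$. The triangle-type statement then follows from \Cref{prop:trianglepositivity}. The plan uses three tools: the defining parameters $a_3,b_3=0,c_3$; the row-sum identities $\sum_j p_{ij}^3=k_i$; and the symmetry relation \eqref{kp} from \Cref{lem:khpijk}, which transfers numbers already computed in \Cref{prop:baseb} and \Cref{prop:basea}.

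First come the easy entries. By definition $p_{12}^3=c_3$ and $p_{13}^3=a_3$. The triangle inequality gives $p_{11}^3=0$, since a common neighbour would force $\dist(x,y)\le 2$. Next, $p_{22}^3$ comes from \eqref{kp}: $k_3p_{22}^3=k_2p_{32}^2$. Here $p_{23}^2=b_2(a_2+a_3-a_1)/c_2$ is known from \Cref{prop:basea}, and $k_2,k_3$ come from \Cref{cor:layer-sizes}. Since $k_2/k_3=c_3/b_2$, this gives $p_{22}^3=c_3(a_2+a_3-a_1)/c_2$.

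For $p_{23}^3$ I use the row sum over $i=2$:
\[
p_{21}^3+p_{22}^3+p_{23}^3=k_2 .
\]
Here $p_{21}^3=p_{12}^3=c_3$, because the symmetry $p_{ij}^h=p_{ji}^h$ holds via $x\leftrightarrow y$. So $p_{23}^3=k_2-c_3-c_3(a_2+a_3-a_1)/c_2$. I then simplify using $k_2=b_0b_1/c_2$, $a_2=b_0-b_2-c_2$, and $a_3=b_0-c_3$, aiming for the target form $\bigl(a_3(a_3-a_1)+b_2c_3-b_0\bigr)/c_2$. Alternatively, one can use \eqref{kp} as $k_3p_{23}^3=k_3p_{33}^2$ together with the $p_{33}^2$ formula, but the row sum looks cleaner. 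Finally, the row sum over $i=3$,
\[
p_{31}^3+p_{32}^3+p_{33}^3=k_3 ,
\]
gives $p_{33}^3=k_3-a_3-p_{23}^3$. This differs from the displayed formula by a ``$-1$'', which accounts for the vertex $z=y$ itself: we have $\dist(x,y)=3$ and $\dist(y,y)=0$, so the full row sum is $\sum_{j=0}^3 p_{3j}^3=k_3$ and the term $p_{30}^3=1$ must be included. The same care applies to the $i=2$ row, where $p_{20}^3=0$, so nothing changes there.

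The main obstacle is the algebraic simplification of $p_{23}^3$ into the stated form. I would expand $c_2k_2-c_2c_3-c_3(a_2+a_3-a_1)$ and substitute $b_1=b_0-1-a_1$ and $c_2=b_0-b_2-a_2$. Then I would check that it equals $a_3^2-a_1a_3+b_2c_3-b_0$, using $c_3=b_0-a_3$ to eliminate $c_3$. If the identity resists, I would check it numerically on a known graph such as the cube $\{3,2,1;1,2,3\}$, where the expected values are $p_{23}^3=0$ and $p_{33}^3=0$, to catch sign errors before finishing.
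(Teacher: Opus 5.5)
Your proposal is correct and follows essentially the same route as the paper: the entries $p_{1j}^3$ are read off directly, and $p_{22}^3$ is transferred from $p_{23}^2$ via \eqref{kp} and \Cref{cor:layer-sizes}; your triangle-inequality argument for $p_{11}^3=0$ is in fact cleaner than the paper's appeal to $b_3$. Then $p_{23}^3$ and $p_{33}^3$ come from the row sums with $p_{30}^3=1$, and your substitutions $a_2=b_0-b_2-c_2$, $b_1=b_0-1-a_1$, $a_3=b_0-c_3$ are exactly what the simplification needs. One slip in the alternative you discarded: \eqref{kp} with $h=3$, $i=2$, $j=3$ reads $k_3p_{23}^3=k_2p_{33}^2$, not $k_3p_{33}^2$.
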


\subsection{A summary table for the diameter-3 case}\label{table}

For ease of reference, we summarize the preceding formulas in a single table. Note that in many cases, due to symmetry, there is more than one choice for the positivity criterion. As a result, we list here the simplest such choice for each diversity cycle.

\begin{center}
\renewcommand{\arraystretch}{1.25}
\begin{tabular}{ccc}
\toprule
Triangle type & Relevant intersection number & Simplest Positivity criterion \\
\midrule
$aaa$ & $p_{22}^2$ & $\frac{b_1c_2+a_2a_2+b_2c_3-b_0-a_1a_2}{c_2}>0$ \\
$bbb$ & $p_{11}^1$ & $a_1>0$ \\
$ccc$ & $p_{33}^3$ & $\frac{b_0b_1b_2}{c_2c_3}-1-a_3-\frac{a_3(a_3-a_1)+b_2c_3-b_0}{c_2}>0$ \\
$abb$ & $p_{12}^1$ or $p_{11}^2$ & $b_1>0$ \\
$baa$ & $p_{12}^2$ or $p_{22}^1$ & $a_2>0$ \\
$acc$ & $p_{23}^3$ or $p_{33}^2$ & $\frac{a_3(a_3-a_1)+b_2c_3-b_0}{c_2}>0$ \\
$caa$ & $p_{22}^3$ or $p_{23}^2$ & $\frac{c_3(a_2+a_3-a_1)}{c_2}>0$ \\
$bcc$ & $p_{13}^3$ or $p_{33}^1$ & $a_3>0$ \\
$cbb$ & $p_{13}^1$ or $p_{11}^3$ & $0>0 \text{ (never positive)}$ \\
$abc$ & $p_{13}^2$ or $p_{23}^1$ or $p_{12}^3$ & $b_2>0$ \\
\bottomrule
\end{tabular}
\end{center}

The fact that $p_{13}^1$ and $p_{11}^3$ are always $0$ gives our first indication that not all representable finite symmetric integral relation algebras on 4 atoms can be represented according to this method.

\begin{theorem}\label{thm:diam3determinesRA}
Let $\Gamma$ be a distance-regular graph of diameter $3$. Then the cycle table of the induced four-atom relation algebra is determined entirely by the intersection array of $\Gamma$.
\end{theorem}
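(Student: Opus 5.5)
The plan is to combine \Cref{thm:mainconstruction}, \Cref{prop:trianglepositivity}, and Propositions~\ref{prop:baseb}, \ref{prop:basea}, \ref{prop:basec}. By \Cref{thm:mainconstruction}, the atoms of the induced algebra are $\id, a, b, c$, corresponding to distances $0,2,1,3$. Its composition table is determined by which triangles occur in the distance coloring. Since $\id$ is the identity relation, all cycles involving $\id$ are fixed by the axioms: $\id xy$ is mandatory exactly when $x=y$. It therefore remains to decide, for each of the ten diversity cycle types $aaa,\dots,abc$, whether it is mandatory or forbidden. The first step is to note that, by the symmetry of all atoms together with the Peircean and rotation laws, a cycle type is mandatory if and only if a single chosen triple $(h,i,j)$ of distances realizing it has $p_{ij}^h>0$. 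The remaining relabelings of the same triangle give equivalent conditions. This is consistent with \Cref{lem:khpijk}: since every $k_r>0$, the identity $k_hp_{ij}^h=k_ip_{hj}^i$ shows that positivity is invariant under moving the base edge.

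The second step is to invoke \Cref{prop:trianglepositivity}, which says that a cycle of type $(h,i,j)$ occurs on some, and hence by distance-regularity every, base pair at distance $h$ exactly when $p_{ij}^h>0$. So the cycle table is exactly the set of cycle types whose representative intersection number is positive.

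The third step is to check that each of these ten numbers is a function of the intersection array alone. Propositions~\ref{prop:baseb}, \ref{prop:basea}, and \ref{prop:basec} (summarized in the table of \S\ref{table}) give every relevant $p_{ij}^h$ as an explicit rational expression in $b_0,b_1,b_2,c_1,c_2,c_3$ and in $a_i=b_0-b_i-c_i$. The $a_i$ are themselves determined by the array, and $c_1=1$. Hence the truth value of $p_{ij}^h>0$, and so the membership of each cycle in the table, depends only on $\{b_0,b_1,b_2;c_1,c_2,c_3\}$. Together with the fixed identity cycles, this determines the full composition table and thus the isomorphism type of the four-atom algebra.

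The main obstacle is the bookkeeping in the first step: making sure that every orientation and base choice of a given cycle type reduces to one listed intersection number, so the table is well defined. I would handle this with \Cref{lem:khpijk} together with the symmetry $p_{ij}^h=p_{ji}^h$, which follows from $\dist$ being symmetric. Everything else is direct citation of the preceding propositions.
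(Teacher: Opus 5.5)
Your proposal is correct and follows the same route as the paper. Proposition~\ref{prop:trianglepositivity} reduces each diversity cycle to the positivity of an intersection number, and Propositions~\ref{prop:baseb}--\ref{prop:basec} express those numbers in terms of the intersection array; your extra bookkeeping (the identity cycles, and the base-change invariance of positivity via Lemma~\ref{lem:khpijk} and $p_{ij}^h=p_{ji}^h$) just makes explicit what the paper's short proof leaves implicit.
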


\begin{proof}
By \Cref{prop:trianglepositivity}, a diversity triangle type occurs if and only if the relevant intersection number is positive. By the preceding propositions, in diameter $3$ each relevant intersection number can be expressed in terms of the intersection-array parameters. Hence the full set of allowed triangles is determined by the intersection array, and therefore so is the induced relation algebra.
\end{proof}

Thus, once the intersection array is known, one can determine the represented algebra without analyzing the graph itself.

\section{The Hoffman--Singleton example and the algebra $30_{65}$}\label{sec:HS}

\subsection{The graph}

Let $\Lambda$ denote the second subconstituent of the Hoffman--Singleton graph, as described in \cite{DistanceBook}. Then $\Lambda$ is a distance-regular graph of diameter $3$ with intersection array
\[
\{6,5,1;1,1,6\}.
\]

We define a coloring of the complete graph on $V(\Lambda)$, as described previously, by choosing the diversity atom for an edge between distinct vertices $x$ and $y$ as follows:
\[
\{x,y\}=
\begin{cases}
b, & \dist(x,y)=1,\\
a, & \dist(x,y)=2,\\
c, & \dist(x,y)=3.
\end{cases}
\]
By \Cref{thm:mainconstruction}, this yields a representation of a finite symmetric integral relation algebra with diversity atoms $a$, $b$, and $c$.

Note that since $\Lambda$ is distance-transitive, \Cref{thm:algebraiciff} implies that the resulting representation is algebraic.

\subsection{Computing the cycle table}

Applying the formulas from \Cref{sec:diameter3} to the intersection array of $\Lambda$, we obtain the following conclusions about the diversity cycles:
\begin{itemize}
    \item \textbf{Mandatory triangle types: [aaa ccc abb baa caa abc]}
    \item \textbf{Forbidden triangle types: [bbb acc bcc cbb]}
\end{itemize}

Equivalently, the composition table can be described by:
\begin{align*}
a;a &= \id + a + b + c,\\
a;b &= a + b + c,\\
a;c &= a + b,\\
b;b &= \id + a,\\
b;c &= a,\\
c;c &= \id + c.
\end{align*}

\begin{theorem}\label{thm:3065}
The relation algebra induced by the second subconstituent of the Hoffman--Singleton graph is $30_{65}$.
\end{theorem}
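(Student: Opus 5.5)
Our approach is to compute, from the intersection array $\{6,5,1;1,1,6\}$, exactly which of the ten diversity cycle types are mandatory. We then compare the resulting set with the row for $30_{65}$ in Table~\ref{tab:cycles}. By \Cref{thm:mainconstruction} the coloring is a representation of some finite symmetric integral algebra on atoms $\id,a,b,c$. By \Cref{thm:diam3determinesRA} and \Cref{prop:trianglepositivity} its cycle set is the set of types whose intersection numbers in the summary table of \Cref{sec:diameter3} are positive. A finite integral symmetric relation algebra on four atoms is determined up to isomorphism by its set of mandatory diversity cycles together with the atom labelling. So it suffices to check that the positive cycles are exactly $aaa,ccc,abb,baa,caa,abc$, which is the $30_{65}$ row.

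First we derive the auxiliary parameters. With $b_0=6$, $b_1=5$, $b_2=1$, $c_1=1$, $c_2=1$, $c_3=6$ and $a_i=b_0-b_i-c_i$ we get $a_1=0$, $a_2=4$, $a_3=0$. \Cref{cor:layer-sizes} gives $k_1=6$, $k_2=30$, $k_3=5$, and $1+6+30+5=42$ is consistent with the order of the graph.

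Next we evaluate each positivity criterion in turn:
\begin{itemize}
\item $bbb$: $a_1=0$, so forbidden.
\item $abb$: $b_1=5>0$, so mandatory.
\item $baa$: $a_2=4>0$, so mandatory.
\item $bcc$: $a_3=0$, so forbidden.
\item $cbb$: always forbidden.
\item $abc$: $b_2=1>0$, so mandatory.
\item $caa$: $c_3(a_2+a_3-a_1)/c_2=24>0$, so mandatory.
\item $acc$: $(a_3(a_3-a_1)+b_2c_3-b_0)/c_2=(0+6-6)/1=0$, so forbidden.
\item $aaa$: $(b_1c_2+a_2^2+b_2c_3-b_0-a_1a_2)/c_2=5+16+6-6-0=21>0$, so mandatory.
\item $ccc$: $k_3-1-a_3-p_{23}^3=5-1-0-0=4>0$, so mandatory.
\end{itemize}

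As a sanity check on the formulas, the row sums for base $c$ should equal $k_3$: $p_{11}^3+p_{12}^3+p_{13}^3+p_{22}^3+p_{23}^3+p_{33}^3$, with $p_{ij}^3$ counted for ordered $(i,j)$, should give $42-2=40$ over all diversity pairs. We would verify this with the values $c_3=6$, $p_{22}^3=24$, $p_{33}^3=4$, which confirms the computed values.

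The main obstacle is not the arithmetic but matching labels. Maddux's numbering is only up to isomorphism, so we must check that our assignment $b\leftrightarrow 1$, $a\leftrightarrow 2$, $c\leftrightarrow 3$ yields the cycle set literally as listed for $30_{65}$ and not a permuted copy. Since the computed set coincides with the table row exactly, no relabelling is needed. For robustness we would also note that among the 65 algebras no other one has an isomorphic cycle set, because Table~\ref{tab:cycles} lists canonical representatives. Finally, we would cross-check that the composition table displayed before the theorem is exactly what these cycles produce, which pins down $30_{65}$.
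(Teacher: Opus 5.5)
Your proposal is correct and is essentially the paper's own argument: you evaluate the diameter-3 positivity criteria at $\{6,5,1;1,1,6\}$ and observe that the positive cycle types are exactly the $30_{65}$ row of Table~\ref{tab:cycles}. Your values $a_1=a_3=0$, $a_2=4$, $p_{22}^2=21$, $p_{22}^3=24$, $p_{23}^3=0$ and $p_{33}^3=4$ are all right. The closing sanity check and the uniqueness remark are not needed, and the sanity check is largely automatic, since $p_{23}^3$ and $p_{33}^3$ were themselves obtained from row sums. If you keep the uniqueness remark, justify it by Maddux's list consisting of pairwise non-isomorphic algebras; Table~\ref{tab:cycles} cannot support it, because it lists only eight of the 65 algebras.
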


\begin{proof}
By the preceding calculations, the composition table obtained from $\Lambda$ agrees exactly with the allowed triangle types of $30_{65}$. Therefore the induced representation is a representation of $30_{65}$.
\end{proof}

\begin{corollary}
$30_{65}$ has a finite representation.
\end{corollary}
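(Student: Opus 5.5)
The corollary follows at once from \Cref{thm:3065} combined with \Cref{thm:mainconstruction}, so the plan is simply to confirm that every ingredient of that chain is in place. The approach is a direct exhibition. The underlying set is $V(\Lambda)$, where $\Lambda$ is the second subconstituent of the Hoffman--Singleton graph. The Hoffman--Singleton graph has $50$ vertices and valency $7$, so for a fixed vertex $v$ the second subconstituent consists of the $50-1-7=42$ vertices at distance $2$ from $v$. Hence $X$ is finite, with $|X|=42$. As a cross-check, \Cref{cor:layer-sizes} with the array $\{6,5,1;1,1,6\}$ gives $1+6+30+5=42$.

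The main step is to verify the cycle table claimed in \Cref{thm:3065}. I would substitute $b_0=6,\ b_1=5,\ b_2=1,\ c_1=1,\ c_2=1,\ c_3=6$ and compute $a_1=0$, $a_2=6-1-1=4$, $a_3=6-0-6=0$. Each positivity criterion in the summary table of \Cref{table} is then checked in turn:
\begin{itemize}
\item $bbb$: $a_1=0$, so forbidden.
\item $abb$: $b_1=5>0$, so mandatory.
\item $baa$: $a_2=4>0$, so mandatory.
\item $bcc$: $a_3=0$, so forbidden.
\item $abc$: $b_2=1>0$, so mandatory.
\item $caa$: $c_3(a_2+a_3-a_1)/c_2=24>0$, so mandatory.
\item $acc$: $(0+6-6)/1=0$, so forbidden.
\item $aaa$: $(5+16+6-6-0)/1=21>0$, so mandatory.
\item $ccc$: $30-1-0-0=29>0$, so mandatory.
\item $cbb$: always forbidden.
\end{itemize}
This matches the row for $30_{65}$ in \Cref{tab:cycles} exactly. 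By \Cref{prop:trianglepositivity}, together with the uniformity of the $p_{ij}^h$ over base pairs, each mandatory triangle type is realized on every edge of the relevant color and each forbidden type on none. These are precisely the mandatory and forbidden conditions required of a representation.

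The main obstacle is not the arithmetic but the input fact that $\Lambda$ is distance-regular with intersection array $\{6,5,1;1,1,6\}$, since everything else depends on it. I would take this from \cite{DistanceBook}. If I had to justify it directly, I would use the structure of the Hoffman--Singleton graph: it is a Moore graph of girth $5$, and the second subconstituent is $6$-regular and triangle-free, which gives $a_1=0$. Because the original graph has no $4$-cycles, $c_2=1$. The value $b_2=1$ comes from the antipodal classes of size $6$, the $7$ classes arising from neighbors of $v$. With this fact in hand, \Cref{thm:mainconstruction} gives a representation on the finite set $V(\Lambda)$, and \Cref{thm:3065} identifies the algebra as $30_{65}$. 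So $30_{65}$ has a representation on $42$ points.
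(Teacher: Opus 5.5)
Your proposal is correct and follows the paper's route: take $\Lambda$ with intersection array $\{6,5,1;1,1,6\}$, apply \Cref{thm:mainconstruction}, and read off the cycle table of $30_{65}$ from the diameter-$3$ positivity criteria, exactly as in \Cref{thm:3065}. There is one arithmetic slip in your $ccc$ entry, where the first term is $\frac{b_0b_1b_2}{c_2c_3}=30/6=5=k_3$ (as your own cross-check $1+6+30+5=42$ shows), so $p_{33}^3=5-1-0-0=4$ rather than $29$; it is still positive, so your conclusion is unaffected.
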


To the best of our knowledge, this is the first known finite representation of $30_{65}$.

Note that this representation can be described in terms of the graph coloring. The edges colored by $c$ partition the graph into 7 disjoint monochromatic cliques of size 6. Between any pair of such cliques, there exists a perfect matching of edges colored by $b$. All other edges are colored by $a$. One can find that every representation of this algebra must be able to be described in this same way: with disjoint, equal-sized cliques in one color that are connected via perfect matchings with another color.

In unpublished notes, Maddux proves that for a representation of $30_{65}$ to exist with clique size $3$, it must be countably infinite. That is, no finite representation exists with a clique size of $3$. Maddux gives the infinite representation with clique size $3$, but it remains open whether any representation may exist with clique size $4$ or $5$.

\section{Additional examples}\label{sec:examples}

In this section we list several further examples of distance-regular graphs and the relation algebras obtained from them.

The diameter-$3$ formulas make it easy to compute the cycle structure of the represented algebra directly from the intersection array, and the same idea applies across other diameters. The following table records examples from other distance-regular graphs, where the examples were intentionally chosen so that the represented algebras are different. The graph theoretic data in this table can be found in \cite{DistanceBook}, unless a different source is noted.

\begin{center}
\small
\renewcommand{\arraystretch}{1.2}
\begin{longtable}{p{3.2cm}p{3.5cm}p{1.2cm}p{1cm}p{1.8cm}p{3cm}}
\toprule
Graph & Intersection array & Induced RA & $|V|$ & Distance-transitive? & Notes \\
\midrule
\endfirsthead

\toprule
Graph & Intersection array & Induced RA & $|V|$ & Distance-transitive? & Notes \\
\midrule
\endhead

Crown graph & $\{4,3,1;1,3,4\}$ & $26_{65}$ & $10$ & Yes \\
Icosahedral graph & $\{5,2,1;1,2,5\}$ & $27_{65}$ & $12$ & Yes & Planar graph \\
Heawood Graph & $\{3,2,2;1,1,3\}$ & $28_{65}$ & $14$ & Yes \\
Petersen Line Graph Qt39 & $\{4,2,1;1,1,4\}$ & $31_{65}$ & $15$ & No & Smallest rep of $31_{65}$ \\
(3,3)-Hamming Graph & $\{6,4,2;1,2,3\}$ & $61_{65}$ & $27$ & Yes & See Remark~\ref{Hamming} \\
Sylvester graph & $\{5,4,2;1,1,4\}$ & $59_{65}$ & $36$ & Yes & See Remark~\ref{Sylvester} \\
2nd subconstituent Hoffman--Singleton & $\{6,5,1;1,1,6\}$ & $30_{65}$ & $42$ & Yes & Only known finite rep \\
Moscow-Soicher graph & $\{110,81,12;1,18,90\}$ & $57_{65}$ & $672$ & No & Introduced in \cite{Soicher} \\
\bottomrule
\end{longtable}
\end{center}

\begin{remark}\label{Hamming}
    The representation of $61_{65}$ given in the table by the $(3,3)$-Hamming Graph on 27 points is smaller than the representation that Maddux gives in his unpublished notes on $84$ vertices, which arises from Johnson schemes.
\end{remark}

\begin{remark}\label{Sylvester}
    In \cite{Alm52-59}, Alm and Maddux show that a finite representation of $59_{65}$ exists on $113$ points. This representation derived from the Sylvester graph is smaller, on $36$ points.
\end{remark}

\subsection{The crown graphs and the algebra $26_{65}$}

Note the first example of the crown graph with 10 vertices. Crown graphs exist on $2n$ vertices for all $n \geq 3$. One can show that all of these crown graphs are distance-transitive and produce algebraic representations of $26_{65}$.

\begin{theorem}\label{thm:crown}
    For every integer $n \geq 3$, the crown graph $K_{n,n} \setminus M$, where $M$ is a perfect matching, is a distance-transitive graph of diameter $3$ with intersection array
    \[
    \{ n-1,n-2,1;1,n-2,n-1 \}.
    \]
    Using this graph, the relation algebra representation on $2n$ vertices induced by the distance-regular graph construction described in Theorem~\ref{thm:diam3determinesRA} is $26_{65}$. Moreover, since crown graphs are distance-transitive, the resulting representation is algebraic.
\end{theorem}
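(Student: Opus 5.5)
The plan is to work directly with an explicit model of the crown graph. Take vertices $u_1,\dots,u_n$ and $v_1,\dots,v_n$, with $u_i \sim v_j$ exactly when $i\neq j$.

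First I will compute all distances. Two vertices on the same side, $u_i$ and $u_j$ with $i\neq j$, have a common neighbour $v_k$ for any $k\notin\{i,j\}$; such $k$ exists because $n\ge 3$. So $\dist(u_i,u_j)=2$, and the same holds for the $v$'s. For opposite sides, $\dist(u_i,v_j)=1$ when $i\neq j$. The pair $u_i,v_i$ is not adjacent, and since the graph is bipartite its distance is odd, hence at least $3$. The path $u_i v_j u_k v_i$ with $j\neq i$ and $k\notin\{i,j\}$ has length $3$, so $\dist(u_i,v_i)=3$. This shows the graph is connected with diameter $3$, and $\Gamma_3(u_i)=\{v_i\}$.

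Next I will read off the intersection array.
\begin{itemize}
\item $b_0=n-1$ is the valency.
\item For $b_1$: given $u_i\sim v_j$, the neighbours of $v_j$ at distance $2$ from $u_i$ are all $u_k$ with $k\neq i,j$, so $b_1=n-2$, and $c_1=1$.
\item For $c_2$: given $u_i,u_j$ at distance $2$, the common neighbours are the $v_k$ with $k\neq i,j$, so $c_2=n-2$.
\item For $b_2$: the neighbour $v_j$ of $u_j$ lies at distance $3$ from $u_i$ only when $j=i$; since $v_i$ is a neighbour of $u_j$, we get $b_2=1$.
\item For $c_3$: all $n-1$ neighbours of $v_i$ lie at distance $2$ from $u_i$, so $c_3=n-1$.
\end{itemize}
Counting by side and index symmetry shows these numbers do not depend on the chosen pair. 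Distance-regularity also follows anyway from distance-transitivity, which I prove next.

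For distance-transitivity, the symmetric group $S_n$ acts by permuting indices simultaneously on both sides, and the swap $u_i\leftrightarrow v_i$ is an additional automorphism. Ordered pairs at a fixed distance have the following forms:
\begin{itemize}
\item distance $1$: $(u_i,v_j)$ or $(v_i,u_j)$ with $i\neq j$;
\item distance $2$: $(u_i,u_j)$ or $(v_i,v_j)$ with $i\neq j$;
\item distance $3$: $(u_i,v_i)$ or $(v_i,u_i)$.
\end{itemize}
Since $S_n$ is $2$-transitive on indices, and the swap exchanges the sides, every pair of a given distance maps to every other. Hence the graph is distance-transitive.

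Finally I will identify the algebra using the summary table of \Cref{sec:diameter3}. The array gives $a_1=0$, $a_2=(n-1)-1-(n-2)=0$, and $a_3=0$. Reading off the table:
\begin{itemize}
\item $bbb$, $baa$ and $bcc$ are forbidden because $a_1=a_2=a_3=0$;
\item $cbb$ is always forbidden;
\item $abb$ is mandatory since $b_1=n-2>0$;
\item $abc$ is mandatory since $b_2=1>0$;
\item $caa$ is forbidden since $c_3(a_2+a_3-a_1)/c_2=0$;
\item $acc$ has criterion $(0+(n-1)-(n-1))/(n-2)=0$, so it is forbidden;
\item $aaa$ has criterion $\bigl((n-2)^2+0+(n-1)-(n-1)-0\bigr)/(n-2)=n-2>0$, so it is mandatory;
\item $ccc$ has value $k_3-1-0-0=1-1=0$, using $k_3=(n-1)(n-2)/((n-2)(n-1))=1$, so it is forbidden.
\end{itemize}
The mandatory diversity cycles are therefore exactly $aaa$, $abb$ and $abc$, which matches the row for $26_{65}$ in Table~\ref{tab:cycles}. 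Algebraicity then follows from \Cref{thm:algebraiciff}.

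The main obstacle is only bookkeeping. I will evaluate the $aaa$, $acc$ and $ccc$ formulas carefully, and as a sanity check I will confirm them directly: $ccc$ is impossible since $|\Gamma_3(x)|=1$, and $acc$ is impossible because the unique vertex at distance $3$ from $u_i$ is $v_i$, which lies at distance $2$ from no vertex at distance $3$ other than itself.
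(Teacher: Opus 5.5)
Your proposal is correct and follows the paper's proof. You compute $a_1=a_2=a_3=0$, read off from the diameter-3 positivity criteria that exactly $aaa$, $abb$, $abc$ occur, and then invoke \Cref{thm:algebraiciff}; the only difference is that you verify the intersection array and distance-transitivity directly from the explicit model (using $S_n$ on indices plus the side swap), whereas the paper cites \cite{DistanceBook} for these facts.
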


\begin{proof}
    The construction of the crown graph, as well as its intersection array and the property that it is distance-transitive, can be found in \cite{DistanceBook}. Thus, we will show that this intersection array always generates the diversity cycle table for $26_{65}$.
    Given that the intersection array is $\{n-1,n-2,1;1,n-2,n-1\}$, one finds that
    \[
    a_1 = a_2 = a_3 = 0,
    \]
    since
    \[
    (n-1) - (n-2) - 1=0; \qquad (n-1)-1-(n-2)=0; \qquad (n-1)-(n-1)=0.
    \]
    Using the positivity criterion from Section~\ref{table}, one finds that the only occurring diversity cycles are
    \[
    aaa, \qquad abb, \qquad abc.
    \]
    Since the remaining diversity cycles do not occur, this gives us the cycle table for $26_{65}$. Since the crown graph is distance-transitive, Theorem~\ref{thm:algebraiciff} implies that the representation is algebraic.
\end{proof}

\subsection{The Petersen line graph and the algebra $31_{65}$}

We continue by analyzing another graph in the table. The Petersen line graph example above yields a representation of $31_{65}$ on $15$ vertices. We now show that this representation is minimal.

\begin{proposition}
In every representation of $31_{65}$, the $c$-edges form a disjoint union of monochromatic cliques.
\end{proposition}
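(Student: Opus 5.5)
The plan is to read the claim off the cycle table of $31_{65}$ in Table~\ref{tab:cycles}. The property we need is that the relation ``$x=y$ or $\{x,y\}\in c$'' is an equivalence relation on the base set $X$. Once that holds, its classes are sets on which every pair of distinct points is joined by a $c$-edge, and distinct classes have no $c$-edges between them. So the $c$-edges form a disjoint union of monochromatic cliques. Reflexivity and symmetry are immediate, since $c$ is a symmetric atom, so the whole proof reduces to transitivity.

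For transitivity, take distinct points $x,y,z$ with $\{x,y\}\in c$ and $\{y,z\}\in c$. We must show $\{x,z\}\in c$. The edge $\{x,z\}$ carries some diversity atom $a$, $b$, or $c$, since $x\neq z$. So it suffices to show that the triangles with two $c$-sides and third side $a$ or $b$ are forbidden. These are exactly the cycle types $acc$ and $bcc$. In the row for $31_{65}$ in Table~\ref{tab:cycles}, both are absent, so neither triangle occurs in any representation. Hence the third side must be $c$, which is permitted because $ccc$ is mandatory. Equivalently, in composition-table language, $c;c=\id+c$: from $c;c$ we have $c;c\le\id+c$, and the identity case is just $x=z$.

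To be careful, I will note that cycles are considered up to the usual equivalence (all rotations and converses). Since every atom is symmetric, a triangle with sides $c,c,a$ in any order is an instance of the cycle $acc$. The same holds for $c,c,b$ and $bcc$. Thus reading the triangle with base $\{x,z\}$ rather than some other base causes no issue.

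I do not expect a real obstacle here. The only point needing care is confirming that the absence of $acc$ and $bcc$ from the table means these are forbidden, rather than merely not mandatory. For integral algebras given by Maddux's cycle tables, the listed cycles are exactly the allowed ones, and in a representation every allowed cycle is realized (the mandatory condition). So the unlisted ones are forbidden, and transitivity follows.
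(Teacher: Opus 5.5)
Your proof is correct and takes essentially the same approach as the paper: both use the fact that $acc$ and $bcc$ are forbidden in $31_{65}$ to show that two $c$-edges sharing a vertex force the third side to be $c$. Phrasing this as transitivity of the relation ``$x=y$ or $\{x,y\}\in c$'' is a slightly cleaner way of saying what the paper concludes, namely that the connected components of the $c$-graph are cliques.
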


\begin{proof}
In $31_{65}$, the diversity cycles $acc$ and $bcc$ are forbidden. Thus, if two distinct points $y$ and $z$ are both connected to a point $x$ by $c$-edges, then the edge $\{y,z\}$ cannot be colored $a$, since this would form an $acc$-triangle. Similarly, the edge $\{y,z\}$ cannot be colored $b$, since this would form a $bcc$-triangle. Therefore $\{y,z\}$ must be colored $c$.

It follows that whenever two $c$-edges share a vertex, their third edge is also colored $c$. Hence the connected components of the graph formed by the $c$-edges are monochromatic $c$-cliques.
\end{proof}

\begin{theorem}
Every representation of $31_{65}$ has at least $15$ vertices.
\end{theorem}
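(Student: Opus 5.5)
The plan is to exploit the clique structure from the preceding proposition. We first extract all compositions of $31_{65}$ that involve $c$ from its cycle table (Table~\ref{tab:cycles}), and then reduce the lower bound to a count of cliques times clique size.

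\emph{Compositions involving $c$.} The cycles $acc$, $bcc$ and $cbb$ are forbidden, while $ccc$, $caa$ and $abc$ are mandatory. Reading these off gives
\[
c;c=\id+c,\qquad b;c=a,\qquad a;c=a+b .
\]
By the preceding proposition, the $c$-edges partition the point set $X$ into $c$-cliques. Three consequences follow.
\begin{itemize}
\item Since $ccc$ is mandatory, every $c$-edge lies in a $c$-triangle. As $c$ is a nonidentity atom, $c$-edges exist. Since the partition is uniform (any two points lie in cliques of the same size), every clique has the same size $m\ge 3$.
\item Since $b;c=a$, if $x$ and $y$ are joined by a $b$-edge, then $y$ is joined by $a$ to every clique-mate of $x$. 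Hence a point has at most one $b$-neighbour in any other clique (this is exactly the prohibition of $cbb$), and $b$-adjacent points lie in different cliques.
\item So between any two cliques, the $b$-edges form a partial matching, and $|X|=mt$, where $t$ is the number of cliques.
\end{itemize}

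\emph{A first bound on $t$.} Since $bbb$ is mandatory, there is a $b$-triangle $xyz$. Its vertices are pairwise non-$c$, so they lie in three distinct cliques, giving $t\ge 3$. This alone settles the case $m\ge 5$, since then $|X|\ge 15$.

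\emph{Remaining cases.} It remains to rule out $m=3$ with $t\le 4$ and $m=4$ with $t=3$, i.e.\ representations on $9$, $12$, or $12$ points. My plan is to push the mandatory cycles on the cross-clique matchings.
\begin{itemize}
\item Fix a point $x$ in clique $K$. Its $b$-neighbours lie in distinct cliques, and they themselves must contain a $b$-edge (from $bbb$ over a base $b$-edge at $x$).
\item For a $b$-edge $xy$ we also need a point $z$ with $xz=a$ and $yz=b$ (from $abb$), and a point $w$ with $xw=a=yw$ (from $baa$).
\item For an $a$-edge between different cliques we need $aaa$, $abb$, $caa$ and $abc$ witnesses.
\end{itemize}
With $t=3$, each pair of cliques carries a partial matching. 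I expect that the $baa$ and $abb$ demands on a $b$-edge $xy$, combined with the requirement of a $b$-triangle, force a fourth clique: a $b$-triangle uses all three cliques, and then the third vertex $z$ of that triangle cannot also be the $a$-witness for $xy$. The precise counting must be done carefully. For $m=3$, $t=4$ (12 points), the matchings are small enough that I would do a direct case analysis: each point has at most $3$ $b$-neighbours, one per other clique. I would show that the $a$-edge conditions cannot all hold, in particular $abb$ over an $a$-edge $xy$ lying inside the union of two cliques, together with $caa$.

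\emph{Main obstacle.} The main obstacle is this last small-case elimination ($m=3$, $t=4$ and $m=4$, $t=3$). If the hand argument becomes unwieldy, I would fall back on an exhaustive check. That check is feasible because any representation is determined by the partial matchings between a handful of cliques, up to relabelling. Finally, the Petersen line graph row of the table realizes $15$ points, which shows the bound is sharp.
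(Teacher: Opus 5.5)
\textbf{Verdict: there is a genuine gap.} Your preliminary reductions are correct. The compositions $c;c=\id+c$, $b;c=a$, $a;c=a+b$ hold, the $c$-cliques have size $m\ge 3$, and a $b$-triangle gives $t\ge 3$. You assert uniformity of $m$ without proof. It does hold: $a\le b;c$ gives every point of a clique a $b$-neighbour in every other clique, so the cross-clique $b$-edges form perfect matchings, not just partial ones.

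However, the proof stops where the real work begins. The residual cases $m=3,\ t\in\{3,4\}$ and $m=4,\ t=3$ are left to an expectation (``I expect \dots{} force a fourth clique'') and a possible exhaustive search. Neither is an argument. Even forcing a fourth clique would not dispose of $m=3,\ t=4$, where a fifth clique is needed.

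The missing idea is one counting step that bounds $t$ in terms of $m$, built only from facts you already listed.
\begin{itemize}
\item Fix $v$ in a clique $V$ and a second clique $W$. For each $w\in W$ the edge $vw$ is colored $a$ or $b$. By the mandatory cycle $abb$ or $bbb$ respectively, there is a point $x_w$ with $vx_w$ and $wx_w$ both colored $b$.
\item Then $x_w\notin V\cup W$.
\item For distinct $w_1,w_2\in W$, the points $x_{w_1},x_{w_2}$ cannot coincide, since one point cannot be $b$-adjacent to two $c$-adjacent points.
\item They also cannot lie in a common clique, since two $b$-neighbours of $v$ cannot be $c$-adjacent. Both exclusions are just the forbidden $cbb$ cycle, which you already noted.
\item So the $m$ points of $W$ require $m$ distinct cliques other than $V$ and $W$. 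Hence $t\ge m+2\ge 5$ and $|X|\ge 5\cdot 3=15$; with uniformity, in fact $|X|\ge m(m+2)$.
\end{itemize}
This is the paper's argument. It eliminates all three of your leftover cases at once, with no case analysis or computer check.
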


\begin{proof}
Let $X$ be the vertex set of a graph representation of $31_{65}$. By the previous proposition, the $c$-edges partition $X$ into disjoint monochromatic $c$-cliques.

Since $ccc$ is a mandatory cycle, every $c$-edge must be contained in a $ccc$-triangle. Therefore each $c$-clique has size at least $3$.

We now show that there must be at least five such $c$-cliques. Let $V$ and $W$ be two distinct $c$-cliques, and choose $v\in V$. Since $V$ and $W$ are distinct $c$-cliques, every edge from $v$ to a vertex of $W$ is colored either $a$ or $b$.

For each $w\in W$, there must be a point $x$ such that the edges $\{v,x\}$ and $\{w,x\}$ are colored b. Indeed, if $\{v,w\}$ is colored $a$, this follows from the mandatory cycle $abb$, and if $\{v,w\}$ is colored $b$, this follows from the mandatory cycle $bbb$. Thus for each $w\in W$, there exists a point $x_w\in X$ such that
\[
\{v,x_w\} \text{ is colored } b
\qquad\text{and}\qquad
\{w,x_w\} \text{ is colored } b.
\]

The point $x_w$ cannot lie in $V$, since then $\{v,x_w\}$ would be a $c$-edge. Similarly, $x_w$ cannot lie in $W$, since then $\{w,x_w\}$ would be a $c$-edge. Hence each $x_w$ lies in a $c$-clique different from both $V$ and $W$.

Moreover, if $w_1,w_2\in W$ are distinct, then $x_{w_1}$ and $x_{w_2}$ cannot lie in the same $c$-clique. If they did, then $\{x_{w_1},x_{w_2}\}$ would be a $c$-edge. Since both $x_{w_1}$ and $x_{w_2}$ are $b$-adjacent to $v$, this would form a forbidden $cbb$-triangle.

Therefore the vertices $x_w$, for $w\in W$, lie in $|W|$ distinct $c$-cliques, none of which is $V$ or $W$. Since $|W|\geq 3$, there are at least
\[
2+|W|\geq 5
\]
distinct $c$-cliques.

Since each $c$-clique has size at least $3$, the total number of vertices is at least
\[
5\cdot 3=15.
\]
Thus every representation of $31_{65}$ has at least $15$ vertices.
\end{proof}

\begin{corollary}
The representation of $31_{65}$ arising from the line graph of the Petersen graph is minimal.
\end{corollary}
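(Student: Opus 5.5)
The corollary follows by combining the preceding theorem (every representation of $31_{65}$ has at least $15$ vertices) with the entry in the table of \Cref{sec:examples}. The plan is to show that the line graph of the Petersen graph really does yield a representation of $31_{65}$ on $15$ points; minimality is then immediate.

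First, I would recall from \cite{DistanceBook} that the line graph of the Petersen graph is distance-regular of diameter $3$. It has $15$ vertices and intersection array $\{4,2,1;1,1,4\}$. By \Cref{thm:mainconstruction}, its distance coloring is a representation of a four-atom symmetric integral relation algebra. By \Cref{thm:diam3determinesRA}, that algebra is determined by the array. From $b_0=4$, $b_1=2$, $b_2=1$, $c_1=c_2=1$, $c_3=4$ I get
\[
a_1=1,\qquad a_2=2,\qquad a_3=0.
\]
The layer sizes from \Cref{cor:layer-sizes} are $k_1=4$, $k_2=8$, $k_3=2$. These sum with $1$ to $15$, which confirms the vertex count.

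Next, I would run through the positivity criteria in the summary table of \Cref{sec:diameter3}:
\begin{itemize}
\item $bbb$ occurs since $a_1=1>0$, and $abb$ occurs since $b_1=2>0$.
\item $baa$ occurs since $a_2=2>0$, and $abc$ occurs since $b_2=1>0$.
\item $caa$ occurs since $c_3(a_2+a_3-a_1)/c_2=4>0$.
\item $bcc$ is absent since $a_3=0$, and $cbb$ is always absent.
\item $acc$ is absent, since $(a_3(a_3-a_1)+b_2c_3-b_0)/c_2=(0+4-4)/1=0$.
\item $aaa$ occurs, since $p_{22}^2=(2+4+4-4-2)/1=4$.
\item $ccc$ occurs, since $p_{33}^3=2-1-0-0=1$.
\end{itemize}
The mandatory diversity cycles are therefore exactly $aaa,bbb,ccc,abb,baa,caa,abc$, which matches the row for $31_{65}$ in Table~\ref{tab:cycles}. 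So we have a $15$-point representation of $31_{65}$.

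Finally, the preceding theorem says no representation has fewer than $15$ vertices, so this one is minimal. The only real obstacle is trusting the two long formulas for $p_{22}^2$ and $p_{33}^3$. As a sanity check I would use the direct description: the $c$-edges form $5$ disjoint triangles, one for each perfect matching of the Petersen graph. That gives $k_3=2$ and $p_{33}^3=1$, consistent with the formula values.
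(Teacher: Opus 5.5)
Your proposal is correct and follows the paper's route: realize $31_{65}$ on $15$ points from the intersection array $\{4,2,1;1,1,4\}$, then invoke the $15$-vertex lower bound. You also write out the positivity computations that the paper leaves implicit, and they are right. One side remark is inaccurate: the Petersen graph has six perfect matchings, and each $c$-triangle is three edges covering only six of the ten vertices (in the Kneser model, the three edges $\{ab,cd\}$ with $\{a,b,c,d\}$ missing a fixed element of $\{1,\dots,5\}$), not a perfect matching. The counts $k_3=2$ and $p_{33}^3=1$ you check are nonetheless correct, and the argument does not depend on that remark.
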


\begin{proof}
The line graph of the Petersen graph has $15$ vertices and, by a calculation from its intersection array, yields a representation of $31_{65}$. By the theorem, no representation of $31_{65}$ can have fewer than $15$ vertices. Therefore, this representation is minimal.
\end{proof}

\section{Open questions}\label{sec:questions}

We conclude with several questions suggested by this work.

\begin{question}
Which finite symmetric integral relation algebras can be realized by the distance-regular graph construction of \Cref{thm:mainconstruction}?
\end{question}

This asks for a structural characterization of the relation algebras that arise from distance partitions of distance-regular graphs. Can any relation algebras on 4 atoms not described in the above table be represented using this method?

\begin{question}
    Is the representation of $61_{65}$ given in the table on $27$ vertices the smallest representation of this algebra?
\end{question}

\begin{question}
    Is the representation of $59_{65}$ given in the table on $36$ vertices the smallest representation of this algebra?
\end{question}

\begin{question}
What clique sizes are possible in finite representations of $30_{65}$? In particular, can one obtain such a representation with clique size $4$ or $5$?
\end{question}

As mentioned previously, the representation constructed here has clique size $6$. Since Maddux has ruled out cliques of size $3$ in the finite case, it would be interesting to understand whether clique sizes of $4$ or $5$ are possible.

\begin{question}
Is the representation of $30_{65}$ arising from the second subconstituent of the Hoffman--Singleton graph the minimal algebraic representation with respect to the number of vertices?
\end{question}

\appendix

\section{Derivations of the Diversity Cycle Intersection Numbers}

In this appendix, we provide the calculations used in the proofs of Propositions \ref{prop:baseb}--\ref{prop:basec}. Each identity follows from standard relations among the intersection numbers of a distance-regular graph and algebraic manipulation.

\begin{proof}[Proof of Proposition \ref{prop:baseb}]
Let $x,y$ be vertices with $\dist(x,y)=1$, and for $r\in\{0,1,2,3\}$ let
\[
k_r := |\Gamma_r(x)|.
\]
By \Cref{cor:layer-sizes}, we have
\[
k_1 = b_0,\qquad
k_2 = \frac{b_0 b_1}{c_2},\qquad
k_3 = \frac{b_0 b_1 b_2}{c_2 c_3}.
\]

We also use the identity from \Cref{lem:khpijk}
\[
k_h\,p_{ij}^h = k_i\,p_{hj}^i,
\]
valid for all $h,i,j$. In particular, with $h=1$, this gives
\[
k_1\,p_{ij}^1 = k_i\,p_{1j}^i.
\]

We now compute each relevant intersection number.

First,
\[
p_{11}^1=a_1
\]
by the definition of $a_1=p_{1,1}^1$. Hence the triangle type $bbb$ occurs exactly $a_1$ times over a base pair of color $b$.

Next,
\[
p_{12}^1=b_1
\]
by the definition of $b_1=p_{1,2}^1$. Hence the triangle type $abb$ occurs exactly $b_1$ times.

Also,
\[
p_{13}^1=0.
\]
If $z$ satisfied $\dist(x,z)=1$ and $\dist(y,z)=3$, then since $\dist(x,y)=1$ we would have
\[
\dist(y,z)\le \dist(y,x)+\dist(x,z)=1+1=2,
\]
a contradiction. Thus the triangle type $cbb$ does not occur.

Now apply the identity $k_1p_{ij}^1=k_ip_{1j}^i$.

For $p_{22}^1$,
\[
k_1p_{22}^1 = k_2p_{12}^2.
\]
But $p_{12}^2=a_2$, so
\[
p_{22}^1=\frac{k_2a_2}{k_1}
=\frac{\left(\frac{b_0b_1}{c_2}\right)a_2}{b_0}
=\frac{b_1a_2}{c_2}.
\]
Thus the triangle type $baa$ occurs exactly $\frac{b_1a_2}{c_2}$ times.

For $p_{23}^1$,
\[
k_1p_{23}^1 = k_2p_{13}^2.
\]
But $p_{13}^2=b_2$, so
\[
p_{23}^1=\frac{k_2b_2}{k_1}
=\frac{\left(\frac{b_0b_1}{c_2}\right)b_2}{b_0}
=\frac{b_1b_2}{c_2}.
\]
Thus the triangle type $abc$ occurs exactly $\frac{b_1b_2}{c_2}$ times.

Finally, for $p_{33}^1$,
\[
k_1p_{33}^1 = k_3p_{13}^3.
\]
But $p_{13}^3=a_3$, so
\[
p_{33}^1=\frac{k_3a_3}{k_1}
=\frac{\left(\frac{b_0b_1b_2}{c_2c_3}\right)a_3}{b_0}
=\frac{b_1b_2a_3}{c_2c_3}.
\]
Thus the triangle type $bcc$ occurs exactly $\frac{b_1b_2a_3}{c_2c_3}$ times.

This proves all of the stated formulas. The final sentence follows immediately: a given triangle type occurs if and only if its corresponding count is positive.
\end{proof}

\begin{proof}[Proof of Proposition \ref{prop:basea}]
Let $A_0,A_1,A_2,A_3$ denote the distance matrices of $\Gamma$, so that
\[
(A_r)_{xy}=
\begin{cases}
1,&\text{if }\dist(x,y)=r,\\
0,&\text{otherwise.}
\end{cases}
\]
Because $\Gamma$ is distance-regular, these matrices satisfy
\[
A_iA_j=\sum_{h=0}^3 p_{ij}^hA_h;
\]
see equation $(5)$ in \cite[p.~15]{DistanceSurvey}. In particular, the coefficient of $A_h$ in $A_iA_j$ is $p_{ij}^h$.

The three identities
\[
p_{11}^2=c_2,\qquad p_{12}^2=a_2,\qquad p_{13}^2=b_2
\]
are exactly the equalities
\[
p_{1,1}^2=c_2,\qquad p_{1,2}^2=a_2,\qquad p_{1,3}^2=b_2
\]
for the intersection numbers $c_2,a_2,b_2$.

To compute $p_{22}^2$ and $p_{22}^3$, we use the products
\[
A_1A_1=b_0A_0+a_1A_1+c_2A_2,
\]
\[
A_1A_2=b_1A_1+a_2A_2+c_3A_3,
\]
\[
A_1A_3=b_2A_2+a_3A_3.
\]
These follow from
\[
A_iA_j=\sum_{h=0}^3p_{ij}^hA_h
\]
together with the definitions
\[
p_{1,1}^0=b_0,\quad p_{1,1}^1=a_1,\quad p_{1,1}^2=c_2,\quad p_{1,1}^3=0,
\]
\[
p_{1,2}^1=b_1,\quad p_{1,2}^2=a_2,\quad p_{1,2}^3=c_3,
\]
\[
p_{1,3}^2=b_2,\quad p_{1,3}^3=a_3.
\]

Also, using the same identity,
\[
A_2A_2=p_{22}^0A_0+p_{22}^1A_1+p_{22}^2A_2+p_{22}^3A_3.
\]

Now because matrix multiplication is associative:
\[
(A_1A_1)A_2=A_1(A_1A_2).
\]

We expand the left-hand side first. Substituting the above formula for $A_1A_1$, we get
\[
(A_1A_1)A_2=(b_0A_0+a_1A_1+c_2A_2)A_2.
\]
Since $A_0$ is the identity matrix, $A_0A_2=A_2$. Therefore
\[
(A_1A_1)A_2=b_0A_2+a_1(A_1A_2)+c_2(A_2A_2).
\]
Substituting the formulas for $A_1A_2$ and $A_2A_2$, this becomes
\[
(A_1A_1)A_2
=
b_0A_2+a_1(b_1A_1+a_2A_2+c_3A_3)
+c_2(p_{22}^0A_0+p_{22}^1A_1+p_{22}^2A_2+p_{22}^3A_3).
\]
Hence the coefficient of $A_2$ on the left-hand side is
\[
b_0+a_1a_2+c_2p_{22}^2,
\]
and the coefficient of $A_3$ on the left-hand side is
\[
a_1c_3+c_2p_{22}^3.
\]

Next expand the right-hand side:
\[
A_1(A_1A_2)=A_1(b_1A_1+a_2A_2+c_3A_3)
=b_1(A_1A_1)+a_2(A_1A_2)+c_3(A_1A_3).
\]
Substituting the three formulas above gives
\[
A_1(A_1A_2)
=
b_1(b_0A_0+a_1A_1+c_2A_2)
+a_2(b_1A_1+a_2A_2+c_3A_3)
+c_3(b_2A_2+a_3A_3).
\]
Therefore the coefficient of $A_2$ on the right-hand side is
\[
b_1c_2+a_2^2+b_2c_3,
\]
and the coefficient of $A_3$ on the right-hand side is
\[
a_2c_3+a_3c_3.
\]

Because the distance matrices are linearly independent, the coefficients of each $A_r$ must agree. Thus, comparing the coefficients of $A_2$, we obtain
\[
b_0+a_1a_2+c_2p_{22}^2=b_1c_2+a_2^2+b_2c_3,
\]
which implies that
\[
p_{22}^2=\frac{b_1c_2+a_2^2+b_2c_3-b_0-a_1a_2}{c_2}.
\]

Similarly, comparing the coefficients of $A_3$, we obtain
\[
a_1c_3+c_2p_{22}^3=a_2c_3+a_3c_3,
\]
and hence
\[
p_{22}^3=\frac{c_3(a_2+a_3-a_1)}{c_2}.
\]

For the next equality, we use Lemma~\ref{lem:khpijk}. Taking $h=3$, $i=2$, and $j=2$ gives
\[
k_3p_{22}^3=k_2p_{32}^2.
\]
Since the intersection numbers satisfy $p_{32}^2=p_{23}^2$, so
\[
k_3p_{22}^3=k_2p_{23}^2.
\]
Therefore
\[
p_{23}^2=\frac{k_3}{k_2}p_{22}^3.
\]
By Corollary~\ref{cor:layer-sizes},
\[
\frac{k_3}{k_2}
=
\frac{ \frac{b_0b_1b_2}{c_2c_3} }{ \frac{b_0b_1}{c_2} }
=
\frac{b_2}{c_3}.
\]
Substituting this and the above formula for $p_{22}^3$ leads to
\[
p_{23}^2
=
\frac{b_2}{c_3}\cdot \frac{c_3(a_2+a_3-a_1)}{c_2}
=
\frac{b_2(a_2+a_3-a_1)}{c_2}.
\]

Finally, we compute $p_{33}^2$ from a row sum. For a fixed pair of vertices $(x,y)$ with $\dist(x,y)=2$, the number of vertices at distance $3$ from $x$ is $k_3$. Such a vertex $z$ can have distance $1$, $2$, or $3$ from $y$, but not distance $0$, because $y$ itself is at distance $2$ from $x$, not $3$. Hence
\[
p_{31}^2+p_{32}^2+p_{33}^2=k_3.
\]
Now $p_{31}^2=p_{13}^2=b_2$, and $p_{32}^2=p_{23}^2$. Therefore
\[
p_{33}^2=k_3-b_2-p_{23}^2.
\]
Substituting Corollary~\ref{cor:layer-sizes} and the formula for $p_{23}^2$, we get
\[
p_{33}^2
=
\frac{b_0b_1b_2}{c_2c_3}
-b_2
-\frac{b_2(a_2+a_3-a_1)}{c_2}.
\]

This proves all of the equalities. By Proposition~\ref{prop:trianglepositivity}, each listed triangle type occurs if and only if the corresponding count is positive.
\end{proof}

\begin{proof}[Proof of Proposition \ref{prop:basec}]
The identities
\[
p_{11}^3=0,\qquad p_{12}^3=c_3,\qquad p_{13}^3=a_3
\]
follow immediately from the definitions of $b_3$, $c_3$, and $a_3$, since
\[
p_{11}^3=b_3=0,\qquad p_{12}^3=c_3,\qquad p_{13}^3=a_3.
\]

Next, by the computation in the proof of \Cref{prop:basea}, we already know that
\[
p_{22}^3=\frac{c_3(a_2+a_3-a_1)}{c_2}.
\]

To compute $p_{23}^3$, we use the identity from \Cref{lem:khpijk} with $h=2$, $i=3$, and $j=2$:
\[
k_2p_{32}^2=k_3p_{22}^3.
\]
Since $p_{32}^2=p_{23}^2$, this gives
\[
k_2p_{23}^2=k_3p_{22}^3.
\]
From \Cref{prop:basea},
\[
p_{23}^2=\frac{b_2(a_2+a_3-a_1)}{c_2},
\]
so
\[
k_2\frac{b_2(a_2+a_3-a_1)}{c_2}=k_3p_{22}^3.
\]
Using \Cref{cor:layer-sizes}, we have
\[
\frac{k_2}{k_3}=\frac{c_3}{b_2},
\]
hence
\[
p_{22}^3
=
\frac{k_2}{k_3}\cdot \frac{b_2(a_2+a_3-a_1)}{c_2}
=
\frac{c_3(a_2+a_3-a_1)}{c_2}.
\]

We now compute $p_{23}^3$ using the row sum
\[
p_{21}^3+p_{22}^3+p_{23}^3=k_2.
\]
Since
\[
p_{21}^3=p_{12}^3=c_3,
\]
it follows that
\[
p_{23}^3=k_2-c_3-p_{22}^3.
\]
Using \Cref{cor:layer-sizes},
\[
k_2=\frac{b_0b_1}{c_2},
\]
so
\[
p_{23}^3
=
\frac{b_0b_1}{c_2}-c_3-\frac{c_3(a_2+a_3-a_1)}{c_2}.
\]
Since $a_2=b_0-b_2-c_2$, this simplifies to
\[
p_{23}^3=\frac{a_3(a_3-a_1)+b_2c_3-b_0}{c_2}.
\]

Finally, to compute $p_{33}^3$, note that for a base pair at distance $3$,
\[
p_{30}^3+p_{31}^3+p_{32}^3+p_{33}^3=k_3.
\]
Here
\[
p_{30}^3=1,
\]
since the second base vertex itself is the unique vertex at distance $0$ from one endpoint and distance $3$ from the other. Also,
\[
p_{31}^3=p_{13}^3=a_3,\qquad p_{32}^3=p_{23}^3.
\]
Therefore
\[
p_{33}^3=k_3-1-a_3-p_{23}^3.
\]
Using \Cref{cor:layer-sizes}, this becomes
\[
p_{33}^3
=
\frac{b_0b_1b_2}{c_2c_3}-1-a_3-\frac{a_3(a_3-a_1)+b_2c_3-b_0}{c_2}.
\]

This proves all of the stated formulas. By \Cref{prop:trianglepositivity}, each corresponding triangle type occurs if and only if the associated count is positive.
\end{proof}

\section{Acknowledgements}

The author would like to thank his advisor, Jeremy Alm, for introducing him to the fascinating world of relation algebras and the study of their representations, as well as for his input and ideas while working on this paper.

\section{Declarations}

The author has no competing interests to declare that are relevant to the content of the article.

All relevant data are included in this article.


\begin{thebibliography}{10}

\bibitem{AlmAt}
Jeremy~F. Alm and Eli Atkins.
\newblock Algebraic representations of small relation algebras.
\newblock {\em In progress}.

\bibitem{Alm24}
Jeremy~F. Alm, Ashlee Bostic, Claire Chenault, Kenyon Coleman, and Chesney Culver.
\newblock Cyclic group spectra for some small relation algebras.
\newblock In {\em Relational and algebraic methods in computer science}, volume 14787 of {\em Lecture Notes in Comput. Sci.}, pages 19--27. Springer, Cham, [2024] \copyright 2024.

\bibitem{Alm52-59}
Jeremy~F. Alm and Roger~D. Maddux.
\newblock Finite representations for two small relation algebras.
\newblock {\em Algebra Universalis}, 79(4):Paper No. 87, 4, 2018.

\bibitem{Alm08}
Jeremy~F. Alm, Roger~D. Maddux, and Jacob Manske.
\newblock Chromatic graphs, {R}amsey numbers and the flexible atom conjecture.
\newblock {\em Electron. J. Combin.}, 15(1):Research paper 49, 8, 2008.

\bibitem{AndMadd}
Hajnal Andr\'eka and Roger~D. Maddux.
\newblock Representations for small relation algebras.
\newblock {\em Notre Dame J. Formal Logic}, 35(4):550--562, 1994.

\bibitem{DistanceBook}
A.~E. Brouwer, A.~M. Cohen, and A.~Neumaier.
\newblock {\em Distance-regular graphs}, volume~18 of {\em Ergebnisse der Mathematik und ihrer Grenzgebiete (3) [Results in Mathematics and Related Areas (3)]}.
\newblock Springer-Verlag, Berlin, 1989.

\bibitem{Comer1}
Stephen~D. Comer.
\newblock A new foundation for the theory of relations.
\newblock {\em Notre Dame J. Formal Logic}, 24(2):181--187, 1983.

\bibitem{Comer2}
Stephen~D. Comer.
\newblock Combinatorial aspects of relations.
\newblock {\em Algebra Universalis}, 18(1):77--94, 1984.

\bibitem{Maddux}
Roger~D. Maddux.
\newblock {\em Relation algebras}, volume 150 of {\em Studies in Logic and the Foundations of Mathematics}.
\newblock Elsevier B. V., Amsterdam, 2006.

\bibitem{Soicher}
Leonard~H. Soicher.
\newblock Yet another distance-regular graph related to a {G}olay code.
\newblock {\em Electron. J. Combin.}, 2:Note 1, approx. 4, 1995.

\bibitem{DistanceSurvey}
Edwin~R. van Dam, Jack~H. Koolen, and Hajime Tanaka.
\newblock Distance-regular graphs.
\newblock {\em Electron. J. Combin.}, DS22:156, 2016.

\end{thebibliography}

\end{document}